\newtheorem{theorem}{Theorem}[section]
\newtheorem{proposition}[theorem]{Proposition}
\newtheorem{lemma}[theorem]{Lemma}
\theoremstyle{remark}
\newtheorem{remark}[theorem]{Remark}
\numberwithin{equation}{section}
\newcommand{\TX}{\mathrm{T}X}
\newcommand{\Hun}{\mathrm{H}^1\hskip-2pt }
\newcommand{\Hze}{\mathrm{H}^0\hskip-2pt }
\newcommand{\KX}{\mathrm{K}_X}
\newcommand{\OXmD}{{\mathcal O}_X (-D)}
\newcommand{\OXD}{{\mathcal O}_X (D)}
\newcommand{\TXmD}{\mathrm{T}{X}(-D)}
\newcommand{\TXD}{\mathrm{T}X (-D)}
\newcommand{\Atlog}{\mathrm{At}_{D}(E)}
\newcommand{\Epar}{E_\star}
\newcommand{\cEpar}{{\mathcal E}_\star}
\newcommand{\EndpEpar}{\mathrm{End}_p\hskip-2pt \left(\Epar\right)}
\newcommand{\EndpFEpar}{\mathrm{End}_p^F\hskip-2pt \left(\Epar\right)}
\newcommand{\EndpLEpar}{\mathrm{End}_p^L\hskip-2pt \left(\Epar\right)}
\newcommand{\EndnLEpar}{\mathrm{End}_n^L\hskip-2pt \left(\Epar\right)}
\newcommand{\Lpar}{L_\star}
\newcommand{\Fpar}{F_\star}
\newcommand{\AtthpE}{{\rm At}^L_p (E)}
\newcommand{\AtpE}{{\rm At}_p (E)}
\newcommand{\EndE}{\mathrm{End}(E)}
\newcommand{\EndpE}{\mathrm{End}_p\hskip-1pt \left(\Epar\right) }
\newcommand{\EndzepE}{\mathrm{End}^0_p \hskip-1pt \left(\Epar\right) }
\begin{document}

\baselineskip=15pt

\title[Isomonodromic deformations and stable parabolic bundles]{Isomonodromic deformations 
of logarithmic connections and stable parabolic vector bundles}

\author[I. Biswas]{Indranil Biswas}

\address{School of Mathematics, Tata Institute of Fundamental
Research, Homi Bhabha Road, Mumbai 400005, India}

\email{indranil@math.tifr.res.in}

\author[V. Heu]{Viktoria Heu}

\address{Institut de Recherche Math\'ematique Avanc\'ee, Univesit\'e de Strasbourg, 7 rue 
Ren\'e-Descartes, 67084 Strasbourg Cedex, France}

\email{heu@math.unistra.fr}

\author[J. Hurtubise]{Jacques Hurtubise}

\address{Department of Mathematics, McGill University, Burnside
Hall, 805 Sherbrooke St. W., Montreal, Que. H3A 0B9, Canada}

\email{jacques.hurtubise@mcgill.ca}

\subjclass[2010]{14H60, 34M56, 53B05, 32G08}

\keywords{Logarithmic connection, isomonodromic deformation, parabolic
bundle, stability, very stability, Teichm\"uller space.}

\date{}

\begin{abstract}
We consider irreducible logarithmic connections $(E,\,\delta)$ over compact Riemann surfaces $X$ 
of genus at least two. The underlying vector bundle $E$ inherits a natural parabolic structure 
over the singular locus of the connection $\delta$; the parabolic structure is
given by the residues of $\delta$. We prove that for the universal isomonodromic 
deformation of the triple $(X,\,E,\,\delta)$, the parabolic vector bundle corresponding to a 
generic parameter in the Teichm\"uller space is parabolically stable. In the case of parabolic vector 
bundles of rank two, the general parabolic vector bundle is even parabolically very stable. 
\end{abstract}

\maketitle
 
\tableofcontents 

\section{Introduction}

Let $(X,\,D)$ be a compact Riemann surface of genus $g$ with $n$ (ordered) marked points 
$D\,=\,(x_1,\,\cdots,\, x_n)$. The monodromy functor produces an equivalence between the category of 
holomorphic connections $(E_0\, ,\delta_0)$ on $X\setminus D$ and the category of equivalence 
classes of linear representations of $\pi_1(X\setminus D,\, x_0)$.
Here the morphisms are 
isomorphisms of vector bundles with connections on one side and conjugation of representations on the 
other side; this is an
example of Riemann--Hilbert correspondence. Moreover, given $(E_0,\, \delta_0)$ , there exists a logarithmic connection 
$(E,\,\delta)$ on $X$, singular over $D$, which extends $(E_0,\, \delta_0)$. Indeed, one can 
choose for example a Deligne extension \cite{De}.

The classical Riemann-Hilbert problem takes $X$ to be the projective line ${\mathbb C}{\mathbb P}^1$ and asks whether 
it is possible to choose $(E,\, \delta)$ extending $(E_0,\, \delta_0)$ such that $E$ is the 
trivial holomorphic vector bundle over $X\,=\, {\mathbb C}{\mathbb P}^1$. The answer to it is no in general; the first 
counterexample was constructed by Bolibruch in \cite{Bolibruch1}. However, the Riemann-Hilbert problem
is known to have a positive answer when $\text{rank}(E_0)\,=\,2$, or when the connection $\delta_0$ is irreducible
\cite{Plemelj}, \cite{Dekkers},\cite{Bolibruch2}, \cite{Kostov}.

An appropriate formulation for the classical Riemann-Hilbert problem in higher genus is to ask whether $(E\, ,\delta)$ can be chosen such that $E$ is semistable of degree $0$. Indeed, with that formulation, the general negative answer as well as the sufficient conditions for positive answers remain valid, as proven in \cite{Helene1} and \cite{Helene2}.

On the other hand, the fundamental group $\pi_1(X\setminus D,\, x_0)$ does not depend on the 
complex structure of $X$. Let us consider $(X,\,D)$ as a fiber of 
the universal family of curves over the Teichm\"uller space $\mathcal{T}_{g,n}$ of
genus $g$ surfaces with $n$ marked points: 
$$\renewcommand{\labelstyle}{\textstyle} \xymatrix{ 
(X,D)\ar[r]\ar[d]&(\mathcal{X},\mathcal{D})\ar[d]_p\\\{t_0\}\ar[r]&\mathcal{T}_{g,n}}$$ The 
fundamental group of each punctured fiber can be identified with $\pi_1(X\setminus D,\, 
x_0)$, because $\mathcal{T}_{g,n}$ is contractible. Given any $(E,\, \delta)$ on $(X,\,D)$, it
extends to a flat logarithmic connection 
$(\mathcal{E},\,\delta')$ over $\mathcal{X}$, singular over $\mathcal{D}$; this
flat logarithmic connection $(\mathcal{E},\,\delta')$ is called the 
\emph{(universal) isomonodromic deformation } of $(E,\, \delta)$ (see Section 
\ref{sec:univdef}). It is called isomonodromic because with respect to a convenient 
identification of the fundamental group of the fibers, the corresponding family of monodromy 
representations is constant.

We are led to another Riemann--Hilbert type problem: \emph{Given any $(E,\, \delta)$, is 
there a parameter $t \,\in\, \mathcal{T}_{g,n}$ such that for the logarithmic connection 
$(\mathcal{E}^t,\delta^t)$ on $p^{-1}(t)$ induced by the isomonodromic deformation, the 
vector bundle $\mathcal{E}^t$ is semistable?} The partial answers in \cite{Bolibruch3} and 
\cite{Heu2} to this question were generalized in \cite{BHH1} to the following. If the genus 
$g$ of $X$ is at least $2$ and $\delta$ is irreducible, then for generic parameters $t\,\in\, 
\mathcal{T}_{g,n}$, the vector bundle $\mathcal{E}^t$ is not only semistable but stable.
In case rank two, the general vector bundle is even very stable \cite{BHH2}. This 
remains valid, for an appropriate generalization of the universal isomonodromic deformation 
in case $\delta$ has irregular singularities \cite{Heu2}, \cite{BHH3}.

\begin{remark}
Note that the degree of the vector bundle is a topological invariant and thus remains
constant along the deformation. If one wishes to investigate the above question in the case 
$(E,\delta)$ is \emph{reducible, i.e.,} there is a subbundle $0\,\subsetneq \,F\,\subsetneq\, E$ 
preserved by $\delta$, then one has to impose that $F$ is not a destabilizing bundle. Under 
this additional assumption, the proof in \cite{BHH1} still applies.
\end{remark}

On the other hand, given a logarithmic connection $(E,\,\delta)$ on a curve, there is a 
natural parabolic structure on $E$ supported by the singularities of the connection such that 
the parabolic structure at a singular point of the connection is given by the residue of the 
logarithmic connection at that point (see Section \ref{se2.3}). Therefore, underlying the 
universal isomonodromic deformation is also a family of parabolic vector bundles parametrized 
by $\mathcal{T}_{g,n}$. Our aim here is to investigate the above questions on stability 
and very stability of the general underlying bundle in this context of parabolic vector bundles (see 
Sections \ref{se2.2} and \ref{sec:verystable}).

We prove the following result in two steps 
(see Theorem \ref{thm1} and Theorem \ref{thm2}).

\begin{theorem} 
Let $X$ be a compact Riemann surface of genus $g\geq 2$, and let $D$ be a divisor on $X$. 
Let $\delta$ be a logarithmic connection, singular over $D$, on a holomorphic vector
bundle $E\,\longrightarrow\, X$.
Let $(\mathcal{E},\delta')$ be its universal isomonodromic deformation, with
$$
\mathcal{E}\, \longrightarrow\, {\mathcal X}
\, \stackrel{p}{\longrightarrow}\,{\mathcal T}_{g,n}\, .
$$ 
Denote $\mathcal{E}^t\,:=\,\mathcal{E}|_{\mathcal{X}_t}$, where
$\mathcal{X}_t\,:= \,p^{-1}\left(t\right)$. Denote by $\cEpar^t$ the corresponding parabolic
vector bundle over $\mathcal{X}_t$ with parabolic structure induced by $\delta'|_{\mathcal{E}^t}$.
Then there are closed analytic subsets $${\mathcal Y}\, \subset \, {\mathcal Y}'\, \subset\,
{\mathcal Y}''\, \subset\, \mathcal{T}_{g,n}$$ such that the following statements hold:
\begin{itemize}
\item[$\bullet$] for every $t\, \in\, {\mathcal T}_{g,n}\setminus {\mathcal Y}$, the parabolic
vector bundle $\cEpar^t$
is parabolically semistable;
\item[$\bullet$] for every $t\, \in\, {\mathcal T}_{g,n}\setminus {\mathcal Y}'$, the parabolic
vector bundle $\cEpar^t$
is parabolically stable;
\item[$\bullet$] for every $t\, \in\, {\mathcal T}_{g,n}\setminus {\mathcal Y}''$, the parabolic
vector bundle $\cEpar^t$
is parabolically very stable.
\end{itemize}
If $\delta$ is irreducible, then the analytic subsets ${\mathcal Y}$ and ${\mathcal Y'}$ of ${\mathcal T}_{g,n}$ are
proper, and their codimensions are bounded as follows
$$\mathrm{codim}\left({\mathcal Y}\right)\geq g\, ; \quad \mathrm{codim}\left({\mathcal Y}'\right)\geq g-1\, .$$
If $\delta$ is irreducible and $E$ is of rank $2$, then the analytic subset ${\mathcal Y}''$ is also proper. 
\end{theorem}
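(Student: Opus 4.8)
The plan is to realize each of the three loci as the image, under the projection $p$ to $\mathcal{T}_{g,n}$, of an incidence variety parametrizing the relevant destabilizing data along the fibers of $\mathcal{X}\to\mathcal{T}_{g,n}$. For $\mathcal{Y}$ and $\mathcal{Y}'$ the destabilizing datum is a parabolic subbundle $F^t\subset\cEpar^t$ of parabolic slope $\geq$ (respectively $>$) that of $\cEpar^t$; for $\mathcal{Y}''$ it is a nonzero nilpotent strongly parabolic Higgs field on $\cEpar^t$. First I would observe that, since the parabolic weights and the degree are locally constant along the isomonodromic family, a destabilizing parabolic subbundle has ordinary degree confined to a finite range; hence such subbundles range over finitely many numerical types and form a bounded family. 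Together with properness of the relative Quot scheme, this shows that the union over $t$ of the destabilizing data is a closed analytic subset $\mathcal{Z}$ of a relative Quot scheme over $\mathcal{T}_{g,n}$, so its image is a closed analytic subset of $\mathcal{T}_{g,n}$. The analogous construction with the relative moduli of nilpotent Higgs fields produces $\mathcal{Y}''$, and the inclusions $\mathcal{Y}\subset\mathcal{Y}'\subset\mathcal{Y}''$ follow from the standard implications \emph{very stable} $\Rightarrow$ \emph{stable} $\Rightarrow$ \emph{semistable}.

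To prove properness and the codimension estimates under the irreducibility hypothesis, I would exploit the flatness of $\delta'$. The connection $\delta'$ defines a holomorphic foliation of $\mathcal{X}$ transverse to the fibers; flowing a subbundle $F^{t_0}\subset\cEpar^{t_0}$ along the leaves produces a flat, but in general only $C^\infty$, subbundle over a neighbourhood, and the obstruction to its being holomorphic in a nearby fiber is measured by the second fundamental form of $F^{t_0}$ relative to $\delta'$, coupled with the Kodaira--Spencer class in the $\mathcal{T}_{g,n}$-directions. The crucial point is that, were a destabilizing subbundle to persist holomorphically over a positive-dimensional subset with vanishing second fundamental form, it would be $\delta'$-invariant, hence $\delta$-invariant, contradicting irreducibility. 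Thus the second fundamental form is nonzero, and the differential at $t_0$ of the projection $\mathcal{Z}\to\mathcal{T}_{g,n}$ is computed as the cup product of the Kodaira--Spencer map with this second fundamental form, landing in $H^1(X,\,\mathcal{H}om(F^{t_0},\,\cEpar^{t_0}/F^{t_0}))$ of the appropriate parabolic homomorphism sheaf.

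The codimension bounds then reduce to estimating the corank of this differential, equivalently the dimension of the fibers of $\mathcal{Z}\to\mathcal{T}_{g,n}$. Using $\dim\mathcal{T}_{g,n}=3g-3+n$ together with a Riemann--Roch computation for the parabolic homomorphism sheaf, one finds that the number of independent Teichm\"uller directions along which the destabilizing subbundle is forced out of $\cEpar^t$ is at least $g$ when the slope inequality is strict (failure of semistability) and at least $g-1$ when equality is allowed (failure of stability). The unit discrepancy reflects that a subbundle of strictly larger parabolic slope is rigid, whereas destabilizing subbundles of the \emph{same} parabolic slope may move in a one-dimensional family within a single fiber, absorbing one condition. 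Taking complements yields $\mathrm{codim}\left(\mathcal{Y}\right)\geq g$ and $\mathrm{codim}\left(\mathcal{Y}'\right)\geq g-1$; in particular both subsets are proper since $g\geq 2$.

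Finally, for very stability in rank two, I would run the same mechanism with a nonzero nilpotent strongly parabolic Higgs field $\theta^t\colon\cEpar^t\to\cEpar^t\otimes\mathrm{K}_X$ in place of a subbundle. In rank two, nilpotency forces $\theta^t$ to have rank one, so its kernel is a line subbundle $L^t\subset\cEpar^t$ through which $\theta^t$ factors; this reduces the analysis of $\mathcal{Y}''$ to a destabilizing-type computation for $L^t$, to which the isomonodromic transversality argument applies again to show that such Higgs fields cannot persist over all of $\mathcal{T}_{g,n}$. The main obstacle throughout is the second step: verifying that the cup product with the second fundamental form genuinely attains the expected rank, so that the isomonodromy flow moves the destabilizing datum at the asserted rate and yields the stated codimensions rather than merely properness. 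This is precisely where irreducibility of $\delta$ is indispensable, and the careful Riemann--Roch bookkeeping for parabolic sheaves is what separates the present statement from its non-parabolic counterpart.
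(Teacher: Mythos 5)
Your overall strategy matches the paper's: stratify by Harder--Narasimhan type to get closedness (the paper cites Gurjar--Nitsure rather than running a Quot-scheme boundedness argument, a cosmetic difference), and then show that the isomonodromy flow moves a destabilizing subbundle off the fiber by identifying the differential of the incidence variety over $\mathcal{T}_{g,n}$ with the cup product of the Kodaira--Spencer class against the second fundamental form of $F$ relative to $\delta$, which is nonzero by irreducibility. However, the step you yourself flag as ``the main obstacle'' --- that this cup product attains the stated rank --- is precisely the content of the proof, and your sketch does not supply it. The missing idea is this: the second fundamental form is a nonzero map $f_0\colon \TXmD \longrightarrow \EndpE/\EndpFEpar$, and one must \emph{saturate} its image to a line subbundle $\mathcal L\subset \EndpE/\EndpFEpar$, so that $\TXmD\to\mathcal L$ has torsion cokernel and $\Hun\left(X,\TXmD\right)\to\Hun\left(X,\mathcal L\right)$ is surjective. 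Working with the full parabolic Hom sheaf $\mathcal{H}om(F,E/F)$, as you propose, gives no surjectivity onto anything whose $\Hun$ you can control. The decisive point is then that, because $F$ is parabolically destabilizing, $\mathcal L$ sits in the part of ${\rm End}(\Epar)$ of negative (respectively, nonpositive) parabolic weight for the Harder--Narasimhan (respectively, socle) filtration, whence $\deg\mathcal L<0$ (respectively, $\leq 0$), and Riemann--Roch yields $h^1(\mathcal L)\geq g$ (respectively, $\geq g-1$). Your explanation of the unit discrepancy --- rigidity of strictly destabilizing subbundles versus a one-parameter family at equal slope --- is not the actual mechanism; the discrepancy is simply whether $\deg\mathcal L$ can equal zero.

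Two further gaps. First, to differentiate the incidence variety you implicitly assume that the (maximal) destabilizing subbundle extends to the first-order neighbourhood of $t_0$ inside $\mathcal Y$ (respectively, $\mathcal Y'$); the paper justifies this by replacing $t_0$ with a generic point of the stratum, using the stratification picture, and this needs to be said. Second, for $\mathcal Y''$ in rank two your reduction to the kernel line bundle $L$ of the nilpotent Higgs field is the paper's, but the transversality argument is vacuous unless the target cohomology $\Hun\left(X,\mathcal Q\right)$, with $\mathcal Q=L^\vee\otimes (E/L)\otimes\mathcal O_X(-D_L)$, is nonzero; the paper verifies this via Serre duality, using the nonzero section of $\mathcal Q^\vee\otimes\KX$ furnished by $\theta$ itself. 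Without that check, the vanishing of the composition on $\mathrm T_{t_0}\mathcal Y''$ proves nothing about properness.
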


The proof is similar to the non-parabolic case treated in \cite{BHH1} and \cite{BHH2}: the fact that the sets $ 
{\mathcal Y}, {\mathcal Y}', {\mathcal Y}''\subset {\mathcal T}_{g,n}$ are analytically closed is known from 
\cite{Nitsure}. The main issue is proving that these are proper subsets. We proceed with a 
deformation-theoretic approach.
 
This paper is the final one in a series examining the behaviour of ``generic properties'' 
such as stability under isomonodromic deformation; the general gist is that isomonodromic 
deformation is in some sense transversal to the unstable locus. In previous papers, the 
connection was also allowed to have singularities, but these were basically independent of 
the structure examined. in the set-up considered here, the parabolic structure and the 
singularities of the connection are intertwined; the genericity result still holds, however.
 
\section{Logarithmic connections and parabolic bundles}

In this section, we recall the definition of the Atiyah bundle for a vector bundle over a 
pointed curve, and how the Atiyah exact sequence can be used to define logarithmic connections 
on the vector bundle on the one hand, and infinitesimal deformations of the vector bundle on 
the pointed curve on the other hand. We further recall that if a vector bundle is endowed with 
a logarithmic connection, then it has a natural parabolic structure defined by the residues of 
the connection.

\subsection{Logarithmic connections and the Atiyah bundle}\label{Sec:Atiyah}
 
Let $X$ be a compact connected Riemann surface of genus $g$, with $g\, \geq\, 2$.
Fix a finite nonempty subset $$D\,=\,\{x_1,\, \cdots,\, x_n\}\, \subset\, X$$
of distinct ordered points of cardinality $n\,\geq\, 1$. We will employ the
convention of denoting by $\mathrm{T}Z$ the holomorphic tangent
bundle of a complex manifold $Z$. Let
$$
\TX\left(-\log D\right)\,=\, \TXD\,:=\,\TX\otimes_{{\mathcal O}_X} \OXmD
$$
be the logarithmic tangent bundle of $X$.

Take a holomorphic vector bundle $E$ over $X$ of rank $r$. For any $i\, \geq\, 0$, let
$\text{Diff}^i\left(E,\,E\right)$ be the holomorphic vector bundle on $X$ defined by the sheaf of
holomorphic differential operators, of order at most $i$, from the sheaf of holomorphic
sections of $E$ to itself. In other words,
$$
\text{Diff}^i\left(E,\,E\right)\,=\, \text{Hom}(J^i(E),\, E)\,=\, E\otimes J^i(E)^\vee\, ,
$$
where $J^i(E)$ it the $i$-th jet bundle for $E$. Consider the symbol homomorphism
\begin{equation}\label{e1}
\sigma_1\, :\, \text{Diff}^1\left(E,\,E\right)\, \longrightarrow\, \TX\otimes \EndE\, .
\end{equation}
We recall the construction of $\sigma_1$. Take any $x\, \in\, X$ and any $w\, \in\, \mathrm{T}^\vee_xX$. Let
$f_w$ be a holomorphic function defined around $x$ such that $f_w\left(x\right)\,=\, 0$ and $df_w\left(x\right)\,=\, w$.
Let ${\mathcal D}_x$ be a holomorphic section of $\text{Diff}^1\left(E,\,E\right)$ defined around $x$. Then
for any $v\, \in\, E_x$, we have
\begin{equation}\label{f1}
w\left(\sigma_1\left({\mathcal D}_x\left(x\right)\right)\left(v\right)\right)\,=\,
{\mathcal D}_x \left(f_w\cdot v'\right)(x)\, ,
\end{equation}
where $v'$ is a holomorphic section of
$E$ defined around $x$ such that $v'\left(x\right)\,=\, v$; note that both sides of \eqref{f1}
are elements of $E_x$. The homomorphism $\sigma_1$ is evidently surjective.
The logarithmic Atiyah bundle is defined as
$$
\Atlog\, :=\, \sigma^{-1}_1\left(\TXD \otimes
\text{Id}_E\right)\, \subset\, \text{Diff}^1\left(E,\,E\right)\, .
$$
It fits in the logarithmic Atiyah exact sequence
\begin{equation}\label{e2}
0\, \longrightarrow\, \EndE\, \longrightarrow\,
\Atlog\, \stackrel{\sigma}{\longrightarrow}\, \TXD \, \longrightarrow\, 0\, ,
\end{equation}
where $\sigma$ is the restriction of the symbol homomorphism $\sigma_1$ in \eqref{e1}. Therefore,
a holomorphic section of $\Atlog$ over an open subset $U\, \subset\, X$ is a
holomorphic differential operator
\begin{equation}\label{oc}
D_U\, :\, E\vert_U \, \longrightarrow\, (E\otimes K_X\otimes {\mathcal O}_U(D))\vert_U\, ,
\end{equation}
where $K_X\, =\, (\TX)^*$ is the holomorphic cotangent bundle of $X$,
satisfying the following Leibniz condition:
$$
D_U (f\cdot s)\,=\, f\cdot D_U(s) + s\otimes df
$$
for every holomorphic
function $f_U$ on $U$ and every holomorphic section $s$ of $E$ over $U$.

We recall that a logarithmic connection on $E$ singular over $D$ is a holomorphic splitting
of the exact sequence in \eqref{e2}, meaning a holomorphic homomorphism
$$
\delta\, :\, \TXD \, \longrightarrow\, \Atlog
$$
such that $\sigma\circ \delta\,=\, \text{Id}_{\TXD}$, where $\sigma$
is the homomorphism in \eqref{e2} \cite{De} (see also \cite{At}).

So a logarithmic connection $\delta$ on $E$ singular over $D$ corresponds to a
holomorphic differential operator over $X$
$$D_X\, :\, E\, \longrightarrow\, E\otimes K_X\otimes {\mathcal O}_U(D)$$
as in \eqref{oc} satisfying the Leibniz condition.

We have the following:
 
\begin{enumerate}
\item The infinitesimal deformations of the $n$-pointed compact Riemann surface
$\left(X,\, D\right)$ are parametrized by $\Hun\left(X,\, \TXD\right)$.

\item The infinitesimal deformations of the above triple $\left(X,\, D,\, E\right)$
are parametrized by $\Hun\left(X,\, \Atlog\right)$.

\item The map $\Hun\left(X,\, \TXD\right) \,\longrightarrow\, \Hun\left(X,\, \Atlog\right)$
corresponding to isomonodromic deformation is the one induced by the connection
$\delta\, :\, \TXD \, \longrightarrow\, \Atlog$.
\end{enumerate}
Here $(1)$ is standard, $(2)$ is a consequence of the results in \cite{Huang} and $(3)$ is explained in \cite{BHH1}.

\subsection{Residue of a logarithmic connection}\label{sec:Res}

Take any $x_j\,\in\, D$. There is a canonical homomorphism
\begin{equation}\label{e3}
\phi_j\, :\, {\Atlog}_{x_j}\,\longrightarrow\, \EndE_{x_j}\,=\, \text{End}\left(E_{x_j}\right)
\end{equation}
which we will now describe. Consider the commutative diagram of homomorphisms of vector spaces
\begin{equation}\label{cd0}\renewcommand{\labelstyle}{\textstyle}
\xymatrix{
0 \ar[r]& \text{End}\left(E_{x_j}\right)\ar[r]^{\alpha_j} \ar@{=}[d] &
{\Atlog}_{x_j} \ar[rr]^{\sigma\left(x_j\right)} \ar[d]^a && {\TXD}_{x_j}\ar[r] \ar[d]^b& 0\\
0 \ar[r] & \text{End}\left(E_{x_j}\right) \ar[r]^{c_j\ \ } & \text{Diff}^1\left(E,\,E\right)_{x_j} \ar[rr]^{\sigma_1\left(x_j\right)\ \ \ } && \left(\TX\otimes \EndE\right)_{x_j} \ar[r] & 0\, , 
 }
\end{equation}
where $\sigma$ and $\sigma_1$ are the homomorphisms in \eqref{e2} and \eqref{e1}
respectively, and the top exact row is the restriction of the exact sequence in \eqref{e2}
to the point $x_j$ while the bottom exact row is the restriction of the Atiyah exact sequence
to the point $x_j$; both the rows in \eqref{cd0} are exact. The homomorphism $a$ in \eqref{cd0} is given
by the natural inclusion of the coherent sheaf $\Atlog$ in $\text{Diff}^1\left(E,\,E\right)$,
while $b$ is induced by $a$.
Note that $b\,=\,0$, as $x_j$ is a point of $D$. This implies that $\sigma_1\left(x_j\right)\circ a\,=\, b\circ \sigma\left(x_j\right)\,=\,
0$. Now from the exactness of the bottom row in \eqref{cd0}
it follows that $\text{image}\left(a\right)\, \subset\,
\text{image}\left(c_j\right)$, and hence there is a unique homomorphism
$$\phi_j\, :\, {\Atlog}_{x_j}\,\longrightarrow\, \text{End}\left(E_{x_j}\right)$$
such that $a\,=\, c_j\circ\phi_j$. This produces the homomorphism in \eqref{e3}.

{}From the commutativity of the diagram in \eqref{cd0}
we conclude that $\phi_j\circ \alpha_j$ coincides with the
identity map of $\text{End}\left(E_{x_j}\right)$. From this it follows immediately
that the restriction of $\sigma\left(x_j\right)$ to $$\text{kernel}\left(\phi_j\right)\, \subset\,
{\Atlog}_{x_j}$$ is an isomorphism with ${\TXD}_{x_j}$. Using this
isomorphism of $\text{kernel}\left(\phi_j\right)$ with ${\TXD}_{x_j}$
we have a decomposition
\begin{equation}\label{e4}
{\Atlog}_{x_j}\,=\, \text{End}\left(E_{x_j}\right)\oplus \text{kernel}\left(\phi_j\right)\,=\,
\text{End}\left(E_{x_j}\right)\oplus {\TXD}_{x_j}\, .
\end{equation}
The fiber ${\TXD}_{x_j}$ is identified
with $\mathbb C$ using the Poincar\'e adjunction formula \cite[p.~146]{GH}. More
explicitly, for any holomorphic coordinate $z$ around $x_j$ with $z\left(x_j\right)\,=\, 0$, the evaluation
of the section $z\frac{\partial}{\partial z}$ of $\TXD$ at the point $x_j$
is independent of the choice of the holomorphic 
coordinate function $z$; the above identification between ${\TXD}_{x_j}$
and $\mathbb C$ sends this independent element of ${\TXD}_{x_j}$ to $1\,\in\, \mathbb C$.

Let $\delta\, :\, \TXD \, \longrightarrow\, \Atlog$ be a logarithmic connection on $E$
singular over $D$. For any $x_j\, \in\, D$, consider
\begin{equation}\label{d0}
\delta\left(x_j\right)\left(1\right)\, \in\, {\Atlog}_{x_j}\,=\, \text{End}\left(E_{x_j}\right)\oplus\mathbb C\, ;
\end{equation}
here the above identification ${\TXD}_{x_j}\,=\, \mathbb C$ is being used. Let
\begin{equation}\label{e5}
\text{Res}\left(\delta\right)\left(x_j\right)\, \in \, \text{End}\left(E_{x_j}\right)
\end{equation}
be the component of $\delta\left(x_j\right)\left(1\right)$ in the direct
summand $\text{End}\left(E_{x_j}\right)$ in \eqref{d0}. This endomorphism
$\text{Res}\left(\delta\right)\left(x_j\right)$ is called the {\it residue} of $\delta$ at the point $x_j$.

The residue
is called \emph{resonant} if it admits two eigenvalues whose difference is a non-zero integer.
The connection $\delta$ is said to be resonant if it possesses a resonant residue.

Let $D_X\, :\, E\, \longrightarrow\, E\otimes K_X\otimes {\mathcal O}_X(D)$ be a
holomorphic differential operator over $X$ as in \eqref{oc} associated to a logarithmic
connection $\delta$ on $E$. For any point $x_j\, \in\, D$, consider the composition
$$
E\, \stackrel{D_X}{\longrightarrow}\, E\otimes K_X\otimes {\mathcal O}_X(D)\, \longrightarrow\,
(E\otimes K_X\otimes {\mathcal O}_X(D))_{x_j}\, =\, E_{x_j}\, ;
$$
the fiber $(K_X\otimes {\mathcal O}_X(D))_{x_j}$ is identified with $\mathbb C$ using the
Poincar\'e adjunction formula. This composition is ${\mathcal O}_X$--linear, and hence it
produces an endomorphism $R_j\, \in\, \text{End}(E_{x_j})$. This endomorphism $R_j$ coincides with
the residue $\text{Res}\left(\delta\right)\left(x_j\right)$ in \eqref{e5}.

\subsection{Parabolic bundles and the notion of stability}\label{se2.2}

Let $E$ be a holomorphic vector bundle over $X$ of positive rank.
A \textit{quasiparabolic structure} on $E$ over the divisor $D$ is a
strictly decreasing filtration of subspaces
\begin{equation}\label{qf}
E_{x_j}\,=\, E^1_j\, \supsetneq\, E^2_j \,\supsetneq\, \cdots\, \supsetneq\,
E^{n_j}_j \, \supsetneq\, E^{n_j+1}_j \,=\, 0
\end{equation}
for every $1\, \leq\, j\, \leq\, n$. A \textit{parabolic structure} on $E$ over $D$ is a
quasiparabolic structure as above together with $n$ decreasing sequences of real numbers
$$
0\, \leq\, \alpha^1_j\, <\, \alpha^2_j\, <\,
\cdots\, < \, \alpha^{n_j}_j \, < 1\, , \ \ 1\, \leq\, j\, \leq\, n \, ;
$$
the real number $\alpha^i_j$ is called the parabolic weight of the subspace $E^i_j$ in
the quasiparabolic filtration. The multiplicity of a parabolic weight
$\alpha^i_j$ at $x_j$ is defined to be the dimension of the complex
vector space $E^i_j/E^{i+1}_j$.
A parabolic vector bundle is a vector bundle with a
parabolic structure. We shall refer to the collection of weights and respective multiplicities at each puncture as the \emph{parabolic data} of a parabolic vector bundle. More details on parabolic bundles can be found in \cite{MS}, \cite{MY}.

Let $\Epar\,=\, \left(E,\, \{E^i_j\}, \,\{\alpha^i_j\}\right)$ be a parabolic bundle as above.
The \textit{parabolic degree} of $\Epar$ is defined to be
$$
\text{par-deg}\left(\Epar\right)\,=\, \text{degree}(E)+\sum_{j=1}^n \sum_{i=1}^{n_j}
\alpha^i_j \dim \left(E^i_j/E^{i+1}_j\right)
$$
\cite[p.~214, Definition~1.11]{MS}, \cite[p.~78]{MY}.

Take any holomorphic subbundle $F\, \subset\, E$. For each $x_j\, \in\, D$, the
fiber $F_{x_j}$ has a filtration obtained by intersecting the quasiparabolic filtration
of $E_{x_j}$ with the subspace $F_{x_j}$. The parabolic weight of a subspace $S\, \subset\,
F_{x_j}$ in this filtration is the maximum of the numbers
$$\{\alpha^i_j\, \mid\, S\, \subset \, E^i_j\cap F_{x_j}\}\, .$$
This way, the parabolic structure on $E$ produces a parabolic structure on the subbundle $F$.
The resulting parabolic bundle will be denoted by $\Fpar$.

A parabolic vector bundle $\Epar\,=\, \left(E,\, \{E^i_j\}, \,\{\alpha^i_j\}\right)$ is called
\textit{stable} (respectively, \textit{semistable}) if for all subbundles $F\, \subsetneq
\, E$ of positive rank the inequality
$$
\frac{\text{par-deg}\left(\Fpar\right)}{\text{rank}\left(\Fpar\right)}\, <\, 
\frac{\text{par-deg}\left(\Epar\right)}{\text{rank}\left(\Epar\right)}\ \
\text{\Big (respectively,} \ \frac{\text{par-deg}\left(\Fpar\right)}{\text{rank}\left(\Fpar\right)} \, \leq\,
\frac{\text{par-deg}\left(\Epar\right)}{\text{rank}\left(\Epar\right)}\text{\Big )}
$$
holds \cite{MS}.

\subsection{Parabolic structure from a logarithmic connection}\label{se2.3}

Let $$\delta\, :\, \TXD \, \longrightarrow\, \Atlog$$ be a logarithmic connection on $E$,
singular over $D$.
Using the residues of $\delta$ defined in \eqref{e5}, we will construct a parabolic structure on $E$.
To each eigenvalue $\boldsymbol{\lambda}$ of 
$\text{Res}\left(\delta\right)\left(x_j\right)$, we associate $$\lambda:=\{\Re \left(\boldsymbol{\lambda}\right)\}:=\Re \left(\boldsymbol{\lambda}\right) - \lfloor \Re \left(\boldsymbol{\lambda}\right) \rfloor \, \in [0,1[\, , $$
the fractional part of its real part. 
Let $x_j\, \in\, D$ and let 
$$
0\, \leq\, \lambda^1_j\, <\, \lambda^2_j\, <\,
\cdots\, < \, \lambda^{n_j}_j \, < 1
$$
be the fractional parts of the real parts of the eigenvalues of
$\text{Res}\left(\delta\right)\left(x_j\right)$. Let $F^i_j\, \subset\, E_{x_j}$
be the sum of the generalized eigenspaces corresponding to those eigenvalues
$\boldsymbol{\lambda}$ of $\text{Res}\left(\delta\right)\left(x_j\right)$ such that
$\{\Re \left(\boldsymbol{\lambda}\right)\}\,=\,\lambda^i_j$.
The parabolic weights of $E$ at $x_j$ are the eigenvalues
$\{\lambda^i_j\}_{i=1}^{n_j}$. The subspace of $E_{x_j}$ corresponding to
the parabolic weight $\lambda^i_j$ is $\bigoplus_{k\geq i} F^k_j$. 
Note that according to this definition, the parabolic structure at $x_i$ is determined
by the semisimple part $\text{Res}^{ss}\left(\delta\right)\left(x_j\right)$ (with respect to
the Jordan decomposition) of the residue at $x_i$. If $$\Re \left(\boldsymbol{\lambda}\right)
\,=\, \{\Re \left(\boldsymbol{\lambda}\right)\}\,\in\, [0,1[$$ for each eigenvalue
for each residue of $\delta$, then $\delta$ is the called the Deligne extension of the restriction of $\delta$ to $E|_{X\setminus D}$.
 
\begin{remark}We note that $\text{degree}(E)+\sum_{j=1}^n\mathrm{trace}\left(\mathrm{Res}\left(\delta\right)\left(x_j\right)\right)=0$ \cite[p.~16, Theorem~3]{Oh}. Therefore,
\begin{equation}\label{de}
\text{par-deg}\left(\Epar\right)\,:=\,
\text{degree}(E)+\sum_{j=1}^n\sum_{i=1}^{n_j} \lambda^i_j\, \in \, \mathbb{Z}\, , 
\end{equation}
 where $\Epar$ is the parabolic vector bundle constructed from
$\left(E,\, \delta\right)$.\end{remark}

\section{Infinitesimal deformations of parabolic bundles}

We shall now establish the space of infinitesimal deformations of parabolic bundles on pointed 
curves, where the base is allowed to vary. Moreover, we are going to take into account the 
information of a further subbundle, which shall later be used for testing of parabolic 
stability.

\subsection{Infinitesimal deformations with fixed base curve}\label{sec:fixed curve}

Fix a pair $\left(X,\, D\right)$ as before. Let $\Epar\,=\, \left(E,\, \{E^i_j\}, \,\{\alpha^i_j 
\}\right)$ be a parabolic vector bundle on $X$ with parabolic structure over the divisor $D$. Let
\begin{equation}\label{eq:defEndpE}
\EndpE\, \subset\, \EndE\,=\, E\otimes E^\vee
\end{equation}
denote the coherent subsheaf that preserves the quasiparabolic filtration over every point of $D$.
So, $\EndpE$ coincides with $\EndE$ over the complement $X\setminus D$.
For each point $x_j\, \in\, D$, the image
of $\text{End}_p(E)_{x_j}$ in $\text{End}\left(E_{x_j}\right)\,=\, \EndE_{x_j}$
consists of all endomorphisms that preserve the quasiparabolic filtration over $x_j$.
In other words, for a section $s$ of $\EndpE$, we have
$$
s\left(E^i_j\right)\, \subseteq\, E^{i}_j
$$
for all $x_j$ in the domain of definition of $s$ and all $1\,\leq\, i\,\leq\, n_j$ (as in \eqref{qf}). Let
\begin{equation}\label{ei}
\text{End}_{p,j}(E)\, \subset\, \text{End}\left(E_{x_j}\right)
\end{equation}
be the image of $\text{End}_p(E)_{x_j}$ in $\text{End}\left(E_{x_j}\right)$. We
have a short exact sequence of coherent sheaves on $X$
\begin{equation}\label{e6}
0\,\longrightarrow\, \EndpE\,\stackrel{\beta_0}{\longrightarrow}\,
\EndE\,\longrightarrow\,\bigoplus_{j=1}^n \EndE_{x_j}/\text{End}_{p,j}(E)
\,\longrightarrow\, 0\, .
\end{equation}
It is known that the infinitesimal deformations of $\Epar$ are parametrized by
$\Hun\left(X,\, \EndpE\right)$ \cite[Section~5]{Yo}. 

\subsection{Infinitesimal deformations with varying base curve}\label{sec:varbase}

Consider the homomorphism $\phi_j$ constructed in \eqref{e3}. The composition
$$
{\Atlog}_{x_j}\,\stackrel{\phi_j}{\longrightarrow}\, \EndE_{x_j}
\,\longrightarrow\, \EndE_{x_j}/\text{End}_{p,j}(E)
$$
will be denoted by $\widehat{\phi}_j$; the above map
$\EndE_{x_j}\,\longrightarrow\, \EndE_{x_j}/\text{End}_{p,j}(E)$ is the
quotient by the subspace in \eqref{ei}. Note that this composition homomorphism
is surjective. Let
$$
\AtpE\,\subset\, {\Atlog}
$$
be the coherent subsheaf that fits in the following short exact sequence:
\begin{equation}\label{e7}
0\,\longrightarrow\, \AtpE\,\longrightarrow\, {\Atlog}\,
\stackrel{\oplus_j \widehat{\phi}_j}{\longrightarrow}\,\bigoplus_{j=1}^n
\EndE_{x_j}/\text{End}_{p,j}(E)\,\longrightarrow\, 0\, .
\end{equation}
Therefore, using \eqref{e2} we have the following commutative diagram with exact rows and columns:\begin{equation}\label{e8}
\renewcommand{\labelstyle}{\textstyle}
\xymatrix{
& 0\ar[d] & 0\ar[d]\\
 0 \ar[r]& \EndpE\ar[d]^{\beta} \ar[r] & \EndE\ar[d]
\ar[r] & \bigoplus_{j=1}^n
\EndE_{x_j}/\text{End}_{p,j}(E) \ar@{=}[d] \ar[r]& 0\\
 0 \ar[r] & \AtpE \ar[r]\ar[d]^{\sigma'} & {\Atlog}\ar[d]^{\sigma} \ar[r] & \bigoplus_{j=1}^n
\EndE_{x_j}/\text{End}_{p,j}(E)\ar[r]& 0\\
& \TXD \ar@{=}[r] \ar[d] & \TXD\ar[d] \\
 & 0 & 0
 }
\end{equation}
 where $\sigma'$ is the restriction of $\sigma$ in \eqref{e2}. We note that a holomorphic
section of $\AtpE$ over an open subset $U\, \subset\, X$ is a
holomorphic differential operator of order one
$$
D_U\, :\, E\vert_U \, \longrightarrow\, E\vert_U
$$
satisfying the following conditions:
\begin{itemize}
\item the symbol of $D_U$ is a holomorphic section of $\TXD$ over $U$ (so
$D_U$ is a section of ${\Atlog}$ over $U$), and

\item for every holomorphic section $s$ of $E\vert_U$, and
every $x_j\, \in\, D\cap U$, if $s\left(x_j\right)\, \in\, E^i_j\, \subset\, E_{x_j}$, then
$D_U\left(s\right)\left(x_j\right)\, \in\, E^i_j$. Here we used the notation in \eqref{qf}.
\end{itemize}

\begin{lemma}\label{lem1}
The infinitesimal deformations of the triple $\left(X,\, D,\, \Epar\right)$, with parabolic
data of fixed type (fixed parabolic weights and their multiplicities), are parametrized by
$\Hun\left(X,\, \AtpE\right)$. The homomorphism
$$
\beta_*\, :\, \Hun\left(X,\, {\rm End}_p\left(\Epar\right)\right)\,\longrightarrow\,
\Hun\left(X,\, \AtpE\right)\, ,
$$
induced by $\beta$ in \eqref{e8}, corresponds to the map of infinitesimal deformations where
the pair $\left(X,\, D\right)$ is kept fixed. The homomorphism
$$
\sigma'_*\, :\, \Hun\left(X,\, \AtpE\right)\,\longrightarrow\,\Hun\left(X,\, \TXD\right)\, ,
$$
induced by $\sigma'$ in \eqref{e8}, is the forgetful map that
sends any infinitesimal deformation of $\left(X,\, D,\, \Epar\right)$ to the
infinitesimal deformation of $\left(X,\, D\right)$ obtained by simply forgetting $\Epar$.
\end{lemma}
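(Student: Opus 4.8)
The plan is to identify each of the three cohomology groups
$\Hun(X,\,\EndpE)$, $\Hun(X,\,\AtpE)$ and $\Hun(X,\,\TXD)$ with the appropriate
deformation space, and then to check that the maps $\beta_*$ and $\sigma'_*$ carry
the described geometric meaning. First I would recall the deformation-theoretic
dictionary established earlier in the excerpt: deformations of $(X,\,D)$ are governed
by $\Hun(X,\,\TXD)$, deformations of the triple $(X,\,D,\,E)$ by $\Hun(X,\,\Atlog)$,
and deformations of the parabolic bundle $\Epar$ with the base curve fixed by
$\Hun(X,\,\EndpE)$ (the last is cited from Yokogawa). The claim of the lemma is the
parabolic analogue of the non-parabolic statement $(2)$, so the natural strategy is to
mimic the identification $\Hun(X,\,\Atlog)\cong\{$deformations of $(X,\,D,\,E)\}$ while
everywhere imposing the parabolic-compatibility constraint at the points of $D$.

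The key step is to interpret $\AtpE$ locally and to match its \v{C}ech $1$-cocycles
with infinitesimal deformations. Concretely, I would fix an open cover $\{U_\alpha\}$
of $X$ and describe a deformation of $(X,\,D,\,\Epar)$ of fixed parabolic type by gluing
data: deformations of the coordinate patches (encoding the deformation of $(X,\,D)$),
together with transition-function deformations for $E$ that preserve the quasiparabolic
filtration at each $x_j$ to first order. The local description of sections of $\AtpE$
given just before the lemma — order-one operators $D_U\colon E|_U\to E|_U$ whose symbol
lies in $\TXD$ and which send sections passing through $E^i_j$ to sections passing through
$E^i_j$ — is exactly the infinitesimal generator of such a filtered gluing. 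Thus a
$1$-cocycle with values in $\AtpE$ is precisely a first-order deformation of the pointed
curve together with a filtration-preserving first-order deformation of $E$, which is
what "deformation of $(X,\,D,\,\Epar)$ of fixed parabolic type" means. The condition that
the weights and their multiplicities stay fixed corresponds exactly to requiring that the
deformed filtration have the same jumping dimensions, which is encoded by the subsheaf
$\text{End}_{p,j}(E)$ in \eqref{ei}.

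Granting that identification, the statements about $\beta_*$ and $\sigma'_*$ follow by
chasing the diagram \eqref{e8}. The map $\beta$ is the inclusion of $\EndpE$ into $\AtpE$
as the kernel of $\sigma'$, so on $\Hun$ it picks out those deformations whose induced
deformation of $(X,\,D)$ is trivial, i.e. those that keep the pointed curve fixed and only
move $\Epar$; this matches the known parametrization of parabolic-bundle deformations by
$\Hun(X,\,\EndpE)$. Dually, $\sigma'$ is the symbol map onto $\TXD$, and on cohomology it
forgets the bundle data, returning the underlying deformation of $(X,\,D)$; thus
$\sigma'_*$ is the forgetful map as claimed. The bottom row of \eqref{e8} is identical to
the logarithmic Atiyah sequence \eqref{e2}, so the commutativity already recorded there
makes these compatibilities formal once the deformation interpretations are in place.

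The main obstacle I expect is the careful verification that $1$-cocycles for $\AtpE$
correspond \emph{bijectively} to filtration-preserving deformations of fixed parabolic
type, rather than merely mapping into them. In particular one must check that the
quasiparabolic filtration deforms holomorphically and that imposing the pointwise
condition "$s(x_j)\in E^i_j\Rightarrow D_U(s)(x_j)\in E^i_j$" on local operators is
equivalent to preserving the flag (and hence the multiplicities) to first order. This is
essentially a local computation at each $x_j$ comparing the jet of the operator with the
tangent space to the relevant partial flag variety; I would handle it by reducing to a
single point, choosing a local frame adapted to the filtration, and observing that the
filtration-preserving order-one operators are exactly the sections of $\AtpE$ whose image
under $\oplus_j\widehat\phi_j$ vanishes, which is the defining property \eqref{e7}. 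Once
this local matching is secured, the global identification and the two assertions about
$\beta_*$ and $\sigma'_*$ are immediate from the exact rows and columns of \eqref{e8}.
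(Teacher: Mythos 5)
Your proposal is correct and follows essentially the same route as the paper: the paper's (very brief) proof also identifies $\AtpE$ as the sheaf of infinitesimal automorphisms of the structure $\left(X,\,D,\,\Epar\right)$ — phrased there as the Lie algebra sheaf of the sheaf of local automorphisms of $(X,\,E)$ preserving the quasiparabolic filtration — and your \v{C}ech-cocycle gluing description is the standard cocycle-level formulation of that same identification. The statements about $\beta_*$ and $\sigma'_*$ then follow from the exactness of \eqref{e8} exactly as you say.
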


\begin{proof}
This lemma is standard.
Consider the sheaf of groups on $X$ given by the local automorphisms of $E$ that preserve the parabolic structure (this
means that the quasiparabolic structure is preserved, because the parabolic weights do not move).
The corresponding sheaf of Lie algebras is ${\rm End}_p\left(\Epar\right)$. More generally consider the
sheaf of groups on $X$ given by the local automorphisms of the pair $(X,\, E)$ that preserve the parabolic structure.
The corresponding sheaf of Lie algebras is $\AtpE$. The lemma can be derived from these
observations.
\end{proof}

The homomorphism $$\Hun\left(X,\, \AtpE\right)\,\longrightarrow\, \Hun\left(X,\,{\Atlog}\right)$$
given by the inclusion $\AtpE\,\hookrightarrow\, {\Atlog}$ in \eqref{e7}
is the forgetful map that
sends any infinitesimal deformation of $\left(X,\, D,\, \Epar\right)$ to the
infinitesimal deformation of $\left(X,\, D,\, E\right)$ obtained by simply forgetting the parabolic data.

\subsection{Infinitesimal deformations of parabolic bundles with a subbundle}\label{sec:defparsubb}

Fix a pair $\left(X,\, D\right)$. As before, let $\Epar\,=\, \left(E,\, \{E^i_j\}, \,\{\alpha^i_j 
\}\right)$ be a parabolic vector bundle on $X$ with parabolic structure over $D$. Fix
a subbundle $0\, \not=\, F\, \subsetneq\, E$.

Let
$$
\EndpFEpar\, \subset\, \EndpEpar
$$
be the subsheaf that preserves $F$. The infinitesimal deformations of
the pair $\left(\Epar,\, F\right)$ (keeping the pair $\left(X,\, D\right)$ fixed) are parametrized by $\Hun\left(X,\,
\EndpFEpar\right)$. The homomorphism
$$
\Hun\left(X,\,\EndpFEpar\right)\, \longrightarrow\, \Hun\left(X,\,\EndpEpar\right)\, ,
$$
given by the inclusion of $\EndpFEpar$ in $\EndpEpar$, corresponds to
the forgetful map of infinitesimal deformations that forgets the subbundle $F$; recall
that $\Hun\left(X,\,\EndpEpar\right)$ is the space of infinitesimal deformations of
$\Epar$. The kernel of this forgetful
homomorphism corresponds to infinitesimal deformations of $F$ keeping $\Epar$ fixed.

Let
\begin{equation}\label{f2}
\mathrm{At}^F_p(E)\, \subset\, {\Atlog}
\end{equation}
be the coherent subsheaf whose sections over any open subset $U\, \subset\, X$
are all holomorphic differential operators
$$
D_U\, :\, E\vert_U \, \longrightarrow\, E\vert_U
$$
satisfying the following two conditions:
\begin{itemize}
\item for every holomorphic section $s$ of $E\vert_U$, and
every $x_j\, \in\, U$, if $s\left(x_j\right)\, \in\, E^i_j\, \subset\, E_{x_j}$, then
$D_U\left(s\right)\left(x_j\right)\, \in\, E^i_j$, and

\item $D_U\left(s\right)$ is a section of $F\vert_U$ if $s$ is a holomorphic section of $F\vert_U$.
\end{itemize}
Therefore, we actually have
\begin{equation}\label{f3}
\mathrm{At}^F_p(E)\, \subset\, \AtpE\, .
\end{equation}
We have the following short exact sequence of vector bundles on $X$:
\begin{equation}\label{z1}
0\, \longrightarrow\, \EndpFEpar\,\longrightarrow\, \mathrm{At}^F_p(E)
\, \longrightarrow\,\TXD \, \longrightarrow\, 0\, .
\end{equation}

Lemma \ref{lem1} has the following straightforward generalization:

\begin{lemma}\label{lem3}
The infinitesimal deformations of the quadruple $\left(X,\, D,\, \Epar,\, F\right)$ with
parabolic data of fixed type are parametrized by
$\Hun\left(X,\, {\rm At}^F_p(E)\right)$. The homomorphism
$$
\Hun\left(X,\, {\rm At}^F_p(E)\right)\, \longrightarrow\, \Hun\left(X,\, \AtpE\right)
$$
given by the inclusion $\mathrm{At}^F_p(E)\, \hookrightarrow\, \AtpE$ in \eqref{f3}
corresponds to the forgetful homomorphism that forgets $F$.
\end{lemma}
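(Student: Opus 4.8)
The plan is to follow the proof of Lemma \ref{lem1} essentially line by line, the only new ingredient being the extra requirement that the subbundle $F$ be preserved. The guiding principle used there is the standard one from deformation theory: the infinitesimal deformations of a geometric structure are classified by $\Hun$ of the sheaf of Lie algebras of the sheaf of groups of its local automorphisms. Concretely, a deformation is produced by regluing the structure over an open cover, the transition data live in the automorphism group sheaf, and passing to the associated Lie algebra and then to first cohomology yields the classifying space of infinitesimal deformations modulo trivial ones.

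First I would identify the relevant sheaf of groups: over an open set $U\subset X$ it consists of the local automorphisms of the pair $(X,\,E)$ that both preserve the parabolic structure over $D\cap U$ and carry $F|_U$ to $F|_U$. Differentiating at the identity, the associated sheaf of Lie algebras is precisely the sheaf $\mathrm{At}^F_p(E)$ of \eqref{f2}: the linearization of ``preserves the quasiparabolic filtration'' is exactly the first defining condition (that a section sends each $E^i_j$ into itself), and the linearization of ``preserves $F$'' is exactly the second defining condition (that a section maps holomorphic sections of $F$ to sections of $F$). As in Lemma \ref{lem1}, one uses here that the parabolic weights are held fixed, so that preserving the parabolic structure infinitesimally reduces to preserving the quasiparabolic filtration. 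Granting this identification, the standard principle immediately gives that $\Hun\left(X,\,\mathrm{At}^F_p(E)\right)$ parametrizes the infinitesimal deformations of the quadruple $\left(X,\,D,\,\Epar,\,F\right)$ with parabolic data of fixed type. The exact sequence \eqref{z1} is consistent with this reading: its kernel term $\EndpFEpar$ is the Lie algebra sheaf of those automorphisms that additionally fix the base $(X,\,D)$, matching the description of deformations of $\left(\Epar,\,F\right)$ with fixed base curve from Section \ref{sec:defparsubb}, while the symbol quotient $\TXD$ records the infinitesimal motion of the pointed curve.

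For the second assertion I would invoke functoriality. The inclusion of sheaves of groups (automorphisms preserving both the parabolic structure and $F$ sit inside those preserving only the parabolic structure) differentiates to the inclusion $\mathrm{At}^F_p(E)\,\hookrightarrow\,\AtpE$ of \eqref{f3}. Since the $\Hun$-classification of deformations is natural in the structure, the induced map $\Hun\left(X,\,\mathrm{At}^F_p(E)\right)\,\longrightarrow\,\Hun\left(X,\,\AtpE\right)$ is exactly the operation of sending a deformation of $\left(X,\,D,\,\Epar,\,F\right)$ to the underlying deformation of $\left(X,\,D,\,\Epar\right)$, that is, the forgetful map that forgets $F$.

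The only point that is not purely formal is the Lie-algebra identification in the first step, namely checking that the derivative of the two preservation conditions on automorphisms reproduces exactly the two defining conditions of $\mathrm{At}^F_p(E)$. This is a routine linearization and constitutes the entire content of the word \emph{straightforward} in the statement; everything else transports unchanged from Lemma \ref{lem1}.
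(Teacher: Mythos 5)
Your proposal is correct and follows exactly the route the paper intends: the paper gives no separate proof of Lemma \ref{lem3}, declaring it a ``straightforward generalization'' of Lemma \ref{lem1}, whose proof is precisely the sheaf-of-local-automorphisms/Lie-algebra identification you carry out, now with the additional condition of preserving $F$, and the forgetful map obtained by functoriality from the inclusion of sheaves. Nothing in your argument deviates from or adds to what the paper relies on.
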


We note that the homomorphism
$$
\Hun\left(X,\, {\rm At}^F_p(E)\right)\, \longrightarrow\, \Hun\left(X,\, {\Atlog}\right)
$$
given by the inclusion ${\rm At}^F_p(E)\, \hookrightarrow\, {\Atlog}$ in \eqref{f2}
corresponds to the forgetful homomorphism that forgets $F$ as well as the parabolic structure on $E$
(recall that the infinitesimal deformations of the triple $\left(X,\, D,\, E\right)$
are parametrized by $\Hun\left(X,\, \Atlog\right)$).

\section{Isomonodromic deformations}

We will now recall the universal isomonodromic deformation of a given initial logarithmic connection, and how it encodes the 
infinitesimal deformation at the initial parameter of the underlying parabolic vector bundle.

\subsection{The initial connection}
Take $\left(X,\, D\right)$ as before.
As in Section \ref{se2.3}, let $E$ be a holomorphic vector bundle over $X$ of rank $r$, and let
$$\delta\, :\, \TXD \, \longrightarrow\, \Atlog$$ be a logarithmic connection on $E$,
singular over $D$. Let $\Epar$ be the parabolic vector bundle defined by the parabolic structure on $E$ given
by the residues of the logarithmic connection $\delta$ (see Section \ref{se2.3}).

\begin{lemma}\label{le-1}
The image $\delta \left(\TXD\right) \,\subset\, \Atlog$ is contained in the subsheaf
$\AtpE\,\subset\, {\Atlog}$ in \eqref{e7}.
\end{lemma}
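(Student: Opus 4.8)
The plan is to verify the containment pointwise over each singular point, exploiting the fact that the cokernel sheaf in \eqref{e7} is a skyscraper sheaf supported on $D$. Since $\AtpE$ is by definition the kernel of the surjection $\oplus_j \widehat{\phi}_j \colon \Atlog \longrightarrow \bigoplus_j \EndE_{x_j}/\mathrm{End}_{p,j}(E)$, it suffices to show that $\widehat{\phi}_j$ annihilates the image of $\delta$ at every $x_j \in D$. A local section of $\delta\left(\TXD\right)$ near $x_j$ has the form $\delta(v)$ for a local section $v$ of $\TXD$; evaluating at $x_j$ and using $\mathcal{O}_X$-linearity of $\delta$ gives $\delta(v)(x_j) = v(x_j)\cdot \delta(x_j)(1)$ after the identification $\TXD_{x_j} = \mathbb{C}$ of Section~\ref{sec:Res}. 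Thus the whole question reduces to computing $\widehat{\phi}_j\left(\delta(x_j)(1)\right)$.

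First I would identify $\phi_j\left(\delta(x_j)(1)\right)$ with the residue. Under the decomposition \eqref{e4}, the homomorphism $\phi_j$ is precisely the projection of $\Atlog_{x_j}$ onto the summand $\EndE_{x_j}$: indeed $\phi_j \circ \alpha_j = \mathrm{Id}$ and $\phi_j$ vanishes on $\mathrm{kernel}(\phi_j)$. Since $\mathrm{Res}(\delta)(x_j)$ was defined in \eqref{e5} as exactly this $\EndE_{x_j}$-component of $\delta(x_j)(1)$, we obtain $\phi_j\left(\delta(x_j)(1)\right) = \mathrm{Res}(\delta)(x_j)$. Consequently $\widehat{\phi}_j\left(\delta(x_j)(1)\right)$ is the class of the residue in $\EndE_{x_j}/\mathrm{End}_{p,j}(E)$, and the lemma follows once we check that $\mathrm{Res}(\delta)(x_j)$ lies in $\mathrm{End}_{p,j}(E)$, that is, that it preserves the quasiparabolic filtration \eqref{qf} of $E_{x_j}$.

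The heart of the matter is therefore to show that the residue preserves the very filtration it defines. By the construction in Section~\ref{se2.3}, each step $E^i_j = \bigoplus_{k\geq i} F^k_j$ of the filtration is a direct sum of the subspaces $F^k_j$, each of which is itself a direct sum of generalized eigenspaces of $\mathrm{Res}(\delta)(x_j)$. Writing $R = \mathrm{Res}(\delta)(x_j)$, any endomorphism commutes with its own powers, so it stabilizes each generalized eigenspace $\mathrm{kernel}\left((R - \boldsymbol{\lambda}\,\mathrm{Id})^N\right)$; applying this to $R$ shows that $R$ preserves each $F^k_j$ and hence each $E^i_j$. Equivalently, both the semisimple and the nilpotent parts in the Jordan decomposition of $R$ stabilize every generalized eigenspace. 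Thus $\mathrm{Res}(\delta)(x_j) \in \mathrm{End}_{p,j}(E)$, so $\widehat{\phi}_j\left(\delta(x_j)(1)\right) = 0$, and the pointwise vanishing established above yields $\delta\left(\TXD\right) \subset \AtpE$.

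I expect no serious obstacle here: the only conceptual point is recognizing, from the residue-to-parabolic-structure dictionary of Section~\ref{se2.3}, that the filtration is built precisely so as to be stabilized by the residue. The computation $\phi_j\left(\delta(x_j)(1)\right) = \mathrm{Res}(\delta)(x_j)$ is a direct reading of the definitions through \eqref{e4} and \eqref{e5}, and the reduction to a pointwise check is routine given the skyscraper nature of the quotient in \eqref{e7}.
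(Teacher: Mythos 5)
Your proof is correct and follows the same route as the paper's: reduce to the pointwise statement that $\mathrm{Res}(\delta)(x_j)$ lies in $\mathrm{End}_{p,j}(E)$, which holds because the quasiparabolic filtration is built from sums of generalized eigenspaces of the residue and is therefore stabilized by it. You simply spell out the details (the identification $\phi_j\left(\delta(x_j)(1)\right)=\mathrm{Res}(\delta)(x_j)$ via \eqref{e4}--\eqref{e5} and the skyscraper reduction) that the paper leaves implicit.
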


\begin{proof}
Take any point $x_j\, \in\, D$. From the construction of the parabolic structure using
$\text{Res}\left(\delta\right)\left(x_j\right)$ it follows that $\text{Res}\left(\delta\right)\left(x_j\right)$ preserves the
quasiparabolic filtration of $\Epar$ over $x_j$. This means that
$$
\text{Res}\left(\delta\right)\left(x_j\right)\, \in\, \text{End}_{p,j}(E)\, \subset\, \EndE_{x_j}\, .
$$
{}From the definition of residue, given in \eqref{e5}, it now follows that
$\delta \left(\TXD\right) \,\subset\,\AtpE$.
\end{proof}

\subsection{The universal isomonodromic deformation}\label{sec:univdef}

For $\left(X,\, D\right)$ as before, fix an ordering of the points of $D$.
Let ${\mathcal T}_{g,n}$ be the Teichm\"uller space for
$\left(X,\, D\right)$. We briefly recall its construction, details can be found for example in \cite{Hubbard}. Let ${\mathbf C}_{g,n}$ denote the
space of all complex structures on $X$, and let $\text{Diff}$ denote the group of
all diffeomorphisms of $X$ that fix $D$ pointwise. Let
$$\text{Diff}^0\, \subset\, \text{Diff}$$
be the connected component containing the identity element.
Then we have $${\mathcal T}_{g,n}\,=\,
{\mathbf C}_{g,n}/\text{Diff}^0\, .$$
This ${\mathcal T}_{g,n}$ is a contractible complex manifold of complex dimension
$3g-3+n$. Note that there is a base point
\begin{equation}\label{bp}
t_0\, \,\in\, {\mathcal T}_{g,n}
\end{equation}
defined by the given complex structure on $X$.

There is a universal $n$-pointed Riemann surface $\left({\mathcal X},\, \left(s_1,\, \cdots , s_n\right)\right)$
over ${\mathcal T}_{g,n}$. This means that
\begin{equation}\label{p}
p\, :\, {\mathcal X}\,\longrightarrow\, {\mathcal T}_{g,n}
\end{equation}
is a holomorphic family of Riemann surfaces such that any fiber $p^{-1}\left(t\right)$ is the
Riemann surface associated to $t$, and
$
s_i\, :\, {\mathcal T}_{g,n}\,\longrightarrow\, {\mathcal X}$, for $ \ 1\, \leq\, i\,\leq\, n\, ,
$
are disjoint sections of the projection $p$ in \eqref{p}. The $n$-pointed Riemann surface
$(p^{-1}\left(t\right),\, (s_1(t),\, \cdots,\, s_n(t)))$ is represented by the point $t\,\in\,
{\mathcal T}_{g,n}$. Moreover, if $t_0$ denotes the base point
in \eqref{bp}, we have the following identification of $n$-pointed Riemann surfaces:
$$\left(p^{-1}\left(t_0\right),\, \left(s_1\left(t_0\right),\, \cdots , s_n\left(t_0\right)\right)\right)=\left(X, \, \left(x_1,\, \cdots , x_n\right)\right)\,=\, \left(X,\, D\right)$$
(recall that we have fixed an ordering of the points of $D$). \\Since ${\mathcal T}_{g,n}$ is contractible, 
the inclusion map
\begin{equation}\label{heq}
X\setminus D\, \hookrightarrow\, {\mathcal X}\setminus
\mathcal{D}
\end{equation}
as the fiber over $t_0$, where $\mathcal{D}\,:=\,
\left(\sqcup_{i=1}^n s_i\left({\mathcal T}_{g,n}\right)\right)$, is a homotopy equivalence.

As in Section \ref{se2.3}, let $E$ be a holomorphic vector bundle on $X$ or rank $r$, and let
\begin{equation}\label{id}
\delta\, :\, \TXD \, \longrightarrow\, \Atlog
\end{equation}
be a logarithmic connection on $E$, singular over $D$. There
exists a vector bundle ${\mathcal E}$ on ${\mathcal X}$, endowed with a flat logarithmic connection $\widetilde{\delta}$, singular over $\mathcal{D}$, such that 
 the restriction of $\left({\mathcal E},\, \widetilde{\delta}\right)$ to $p^{-1}\left(t_0\right)\,=\, X$ is
identified with $\left(E,\, \delta\right)$, where $t_0$ is the base point in \eqref{bp}. Let us briefly recall the construction (see \cite[Section 3]{Heu1} for details). 

Let
$$\rho\, :\, \pi_1\left(X\setminus D,\, x_0\right)\, \longrightarrow\, \text{GL}\left(E_{x_0}\right)$$
be the monodromy representation for the flat connection $\delta$; here $x_0\,\in\, X\setminus D$
is a fixed base point. Since the inclusion map in
\eqref{heq} is a homotopy equivalence, we have a homomorphism
$$
\pi_1\left({\mathcal X}\setminus
\mathcal{D},\, x_0\right)\,=\, \pi_1\left(X\setminus D,\, x_0\right)\,
\stackrel{\rho}{\longrightarrow}\, \text{GL}\left(E_{x_0}\right)
$$
which will be denoted by $\widetilde\rho$. This $\widetilde\rho$ produces a
holomorphic vector bundle $\widetilde{\mathcal E}$ over the complement ${\mathcal X}\setminus
\mathcal{D}$ equipped with a flat holomorphic
connection $ \widetilde{\delta}$ \cite{De}. Using an argument of Malgrange \cite{Mal} generalizing Deligne extensions in this context, this holomorphic vector bundle $\widetilde{\mathcal E}$
admits an extension ${\mathcal E}$ to ${\mathcal X}$ as a holomorphic vector bundle
such that
\begin{itemize}
\item the connection $\widetilde{\delta}$ extends to a logarithmic connection $\delta'$ on ${\mathcal E}$, and
\item the restriction of $\left({\mathcal E},\, \delta'\right)$ to $p^{-1}\left(t_0\right)\,=\, X$ is
identified with $\left(E,\, \delta\right)$, where $t_0$ is the point in \eqref{bp}.
\end{itemize}

The pair $\left({\mathcal E},\, \delta'\right)$ is unique and admits a universal property with respect to germs of isomonodromic
deformations of the same initial connection. It is therefore called the \emph{universal isomonodromic deformation} in \cite{Heu1}. In the current work, we will refer to the pair $\left({\mathcal E},\, \delta'\right)$ simply as 
\emph{the isomonodromic deformation} of the logarithmic connection $\left(E,\, \delta\right)$ on $X$.

For any $t\, \in\, {\mathcal T}_{g,n}$, the Riemann surface $p^{-1}\left(t\right)$ will be 
denoted by ${\mathcal X}_t$. The restriction of the holomorphic vector bundle $\mathcal E$ to 
${\mathcal X}_t$ will be denoted by ${\mathcal E}^t$. The restriction of the logarithmic 
connection $\delta'$ to ${\mathcal E}^t$ will be denoted by $\delta^t$.
 
\subsection{The underlying infinitesimal deformation of the parabolic bundle}\label{sec:parfamily}

We adopt the notation of Section \ref{sec:univdef}.
As shown in Section \ref{se2.3}, the logarithmic connection $\delta^t$ 
produces a parabolic structure on ${\mathcal E}^t$. The resulting parabolic vector bundle
on ${\mathcal X}_t$ will be denoted by ${\mathcal E}^t_\star$. Let
\begin{equation}\label{e10}
\cEpar\, \longrightarrow\, {\mathcal X}
\, \stackrel{p}{\longrightarrow}\,{\mathcal T}_{g,n}
\end{equation}
be the above family of parabolic vector bundles constructed from $\delta'$ (which in turn is
constructed from $\delta$).

\begin{lemma} Let $(\mathcal{E}, \delta)$ be the isomonodromic deformation of $(E,\delta)$. 
Then for each $1\, \leq\,
i\, \leq\, n$, the collection of parabolic weights and their multiplicities of ${\mathcal E}^t_\star$
at the parabolic point $s_i\left(t\right)\, \in\, {\mathcal X}_t$ is independent of $t$.
\end{lemma}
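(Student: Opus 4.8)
The plan is to read the parabolic data at $s_i(t)$ directly off the residue and then show that the relevant part of this residue is rigid along the isomonodromic deformation. Recall from Section~\ref{se2.3} that the parabolic weights of $\mathcal{E}^t_\star$ at $s_i(t)$ are the fractional parts $\{\Re(\boldsymbol{\lambda})\}$ of the real parts of the eigenvalues $\boldsymbol{\lambda}$ of the residue $R_i(t):=\text{Res}(\delta^t)(s_i(t))$, while the multiplicity of a weight $\lambda$ is the sum of the dimensions of the generalized eigenspaces $F^i_j$ attached to those eigenvalues with $\{\Re(\boldsymbol{\lambda})\}=\lambda$, i.e. the sum of the algebraic multiplicities of such $\boldsymbol{\lambda}$ in the characteristic polynomial of $R_i(t)$. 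Hence it suffices to prove that the \emph{multiset} of eigenvalues of $R_i(t)$, counted with algebraic multiplicity, is independent of $t$.

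First I would record that this multiset varies holomorphically in $t$. The divisor $\mathcal{D}_i:=s_i({\mathcal T}_{g,n})$ is smooth and $s_i$ identifies it with ${\mathcal T}_{g,n}$; the residue of the logarithmic connection $\delta'$ along $\mathcal{D}_i$ is a holomorphic endomorphism of $\mathcal{E}|_{\mathcal{D}_i}\cong s_i^{*}\mathcal{E}$, and its value along the fibre $\mathcal{X}_t$ recovers $R_i(t)$, since forming the residue commutes with restriction to a fibre transverse to $\mathcal{D}_i$. Consequently the characteristic polynomial of $R_i(t)$ has coefficients that are holomorphic in $t$, so the eigenvalue multiset, viewed as a point of $\text{Sym}^r(\mathbb{C})$ via Vieta, depends continuously on $t$.

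The isomonodromy furnishes the complementary rigidity. By construction the monodromy representation $\widetilde\rho=\rho$ of $\pi_1(\mathcal{X}\setminus\mathcal{D},x_0)=\pi_1(X\setminus D,x_0)$ is independent of $t$; in particular the local monodromy $M_i$ about the $i$-th puncture is a fixed element of $\text{GL}(E_{x_0})$ under the identification coming from \eqref{heq}. Classically, for a regular singular point the eigenvalues of the local monodromy, counted with multiplicity, are exactly $\exp(2\pi\sqrt{-1}\,\boldsymbol{\lambda})$ as $\boldsymbol{\lambda}$ runs over the eigenvalues of $R_i(t)$ (the sign depending on orientation conventions and immaterial below). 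Thus applying $z\mapsto\exp(2\pi\sqrt{-1}\,z)$ to the eigenvalue multiset of $R_i(t)$ produces the eigenvalue multiset of $M_i$, which is constant in $t$.

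It remains to combine continuity with this constraint, and this last step is the one to argue with care. The map $\exp(2\pi\sqrt{-1}\,\cdot)\colon\mathbb{C}\to\mathbb{C}^{*}$ is a covering, and the induced map $\text{Sym}^r(\mathbb{C})\to\text{Sym}^r(\mathbb{C}^{*})$ has discrete fibres: the preimage of a fixed multiset $\{\mu_1,\ldots,\mu_r\}\subset\mathbb{C}^{*}$ consists of multisets whose entries lie in the discrete set $\bigcup_j\exp^{-1}(\mu_j)$, a finite union of $\mathbb{Z}$-translates, hence remains discrete after the finite symmetric quotient by $S_r$. The continuous assignment $t\mapsto(\text{eigenvalue multiset of }R_i(t))$ therefore maps the connected space ${\mathcal T}_{g,n}$ into a single such discrete fibre, and so is constant; this yields the constancy of the parabolic weights and their multiplicities at $s_i(t)$. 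The main obstacle is precisely this interplay: neither continuity alone (the eigenvalues might drift) nor the monodromy constraint alone suffices, since in the resonant case distinct eigenvalues of $R_i(t)$ differing by nonzero integers collapse to a single eigenvalue of $M_i$, so $M_i$ does not by itself pin down the multiset — it is only the two inputs together that force rigidity.
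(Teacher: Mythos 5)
Your proof is correct, but it takes a genuinely different (and somewhat longer) route than the paper's. The paper's argument is essentially one line: by isomonodromy the conjugacy class of the local monodromy $M_i$ around $s_i(t)$ is independent of $t$, its semisimple part is conjugate to $\exp\bigl(2\pi\sqrt{-1}\,\mathrm{Res}^{ss}(\delta^t)(s_i(t))\bigr)$, and the parabolic weights and multiplicities are \emph{functions of $M_i$ alone} --- because the fractional part $\{\Re(\boldsymbol{\lambda})\}$ is precisely the invariant of a residue eigenvalue $\boldsymbol{\lambda}$ that survives the exponential (two exponents mapping to the same monodromy eigenvalue differ by an integer, hence have the same fractional real part, and the algebraic multiplicities add up correctly). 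In particular your closing remark that ``the monodromy constraint alone does not suffice'' is accurate for the residue eigenvalue multiset but not for the lemma's actual conclusion: the weights and their multiplicities \emph{are} pinned down by the monodromy alone, which is exactly the shortcut the paper takes, with no continuity input needed. What your argument buys in exchange for the extra ingredient (holomorphic variation of the residue along $\mathcal{D}_i$, combined with connectedness of $\mathcal{T}_{g,n}$ and discreteness of the fibres of $\mathrm{Sym}^r(\exp)$) is a strictly stronger conclusion, namely that the full eigenvalue multiset of $\mathrm{Res}(\delta^t)(s_i(t))$, not just its reduction modulo $\mathbb{Z}$, is constant in $t$ --- a fact that is true and occasionally useful, but more than the lemma asks for.
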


\begin{proof}For any $x_0^t$ in ${\mathcal X}_t\setminus \left(\sqcup_{i=1}^n s_i (t)\right)$ and any path from $x_0$ to $x_0^t$ in ${\mathcal X}\setminus
\mathcal{D}$, the holonomy of ${\delta}'$ yields an isomorphism $E_{x_0}\simeq {\mathcal E}^t_{x_0^t}$ identifying the monodromy $\rho$ of $\delta$ with the monodromy of $\delta^t$. Different choices of paths yield conjugated monodromy representations.
However, the conjugacy class of the local monodromy of $\delta^t$ around $s_i(t)$ does not depend on $t \in {\mathcal T}_{g,n}$. On the other hand,
the parabolic data at $s_i(t)$ is entirely encoded by the conjugacy class of the local monodromy of $\delta^t$ around $s_i(t)$. Indeed, the semisimple part of the local monodromy at $s_i(t)$ is conjugated to $\exp \left(\text{Res}^{ss}\left(\delta^t\right)\left(s_i(t)\right)\right)$ (see for example \cite[Theorem 1]{Bolibruch}). \end{proof}
 
In Lemma \ref{lem1} we saw that the
infinitesimal deformations of the triple $\left(X,\, D,\, \Epar\right)$, with parabolic
data of fixed type (fixed parabolic weights and their multiplicities), are parametrized by
$\Hun\left(X,\, \AtpE\right)$. On the other hand, for any $t\, \in\, {\mathcal T}_{g,n}$,
we have
$$
\mathrm{T}_t{\mathcal T}_{g,n}\,=\, \Hun\left({\mathcal X}_t,\, \mathrm{T}{\mathcal X}_t\otimes
{\mathcal O}_{{\mathcal X}_t}\left(-\sum_{j=1}^n s_j\left(t\right)\right)\right)\, .
$$
In particular, we have $\mathrm{T}_{t_0}{\mathcal T}_{g,n}\,=\,\Hun\left(X,\,\TXD\right)$, where $t_0$
is the base point in \eqref{bp}. Let
\begin{equation}\label{e9}
\gamma\, :\, \Hun\left(X,\,\TXD\right)\,=\, \mathrm{T}_{t_0}{\mathcal T}_{g,n}\, \longrightarrow\,
\Hun\left(X,\, \AtpE\right)
\end{equation}
be the classifying homomorphism corresponding to the family of parabolic
vector bundles in \eqref{e10} constructed from $\delta'$ (which in turn is 
constructed from $\delta$).

In Lemma \ref{le-1} we saw that $\delta \left(\TXD\right) \,\subset\,\AtpE$. Let
\begin{equation}\label{e9p}
\delta_*\, :\, \Hun\left(X,\,\TXD\right)\, \longrightarrow\, \Hun\left(X,\,\AtpE\right)
\end{equation}
be the homomorphism induced by $\delta\, :\, \TXD\, \longrightarrow\,
\AtpE$.

\begin{lemma}\label{lem2}
The homomorphism $\gamma$ in \eqref{e9} coincides with the homomorphism
$\delta_*$ in \eqref{e9p}.
\end{lemma}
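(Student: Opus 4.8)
The plan is to reduce the statement to the corresponding non-parabolic identity recorded as the third item in the list at the end of Section~\ref{Sec:Atiyah}, namely that the classifying homomorphism of the isomonodromic family of \emph{vector bundles} $\mathcal{E}$ coincides with the map $\Hun(X,\TXD)\to\Hun(X,\Atlog)$ induced by $\delta\colon\TXD\to\Atlog$; this is explained in \cite{BHH1}. Both $\gamma$ and $\delta_*$ are homomorphisms into $\Hun(X,\AtpE)$, and I would pin them down by examining their images under the two forgetful homomorphisms attached to the central row and right column of diagram~\eqref{e8}, and then showing that the remaining ambiguity — a first-order motion of the quasiparabolic flags with $(X,D)$ and $E$ held fixed — vanishes by isomonodromy together with Lemma~\ref{le-1}.

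First I would forget the parabolic structure. By Lemma~\ref{le-1} the connection factors as $\TXD\to\AtpE\hookrightarrow\Atlog$, the composite being $\delta$ itself, so postcomposing $\delta_*$ with the forgetful homomorphism $\Hun(X,\AtpE)\to\Hun(X,\Atlog)$ of \eqref{e7} yields the map induced by $\delta$. On the other side, forgetting the parabolic structure of the family $\cEpar$ produces exactly the isomonodromic family of vector bundles $\mathcal{E}$, so the same postcomposition sends $\gamma$ to the non-parabolic classifying homomorphism. By the non-parabolic identity quoted above these two agree, whence $\Delta:=\gamma-\delta_*$ lands in the kernel of $\Hun(X,\AtpE)\to\Hun(X,\Atlog)$. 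From the long exact sequence of \eqref{e7} this kernel is the image of the connecting homomorphism from $\Hze$ of the skyscraper quotient; it therefore consists precisely of those infinitesimal deformations of $\Epar$ that fix $(X,D)$ and the underlying bundle $E$ and only move the quasiparabolic flags. (Forgetting instead the bundle via the right column of \eqref{e8} gives $\sigma'_*\circ\delta_*=\mathrm{id}$, since $\sigma\circ\delta=\mathrm{id}$, and $\sigma'_*\circ\gamma=\mathrm{id}$, since forgetting $\Epar$ from the tautological family over $\mathcal{T}_{g,n}$ is the identity on $\mathrm{T}_{t_0}\mathcal{T}_{g,n}$; this re-confirms that $\Delta$ is a pure flag motion but adds no further constraint.)

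The crux is thus to show that $\gamma$ and $\delta_*$ induce the same first-order motion of the parabolic flags, and here the homological bookkeeping is insufficient and the geometry of the deformation must be used. The point is that both are the \emph{flat} lift. For $\delta_*$ this is immediate: since $\delta$ preserves the quasiparabolic filtration (Lemma~\ref{le-1}), in a $\delta$-horizontal frame the flag does not move to first order. For $\gamma$ the flag at $s_j(t)$ is the eigenspace flag of $\mathrm{Res}^{ss}(\delta^t)(s_j(t))$; by the constancy of the conjugacy class of the local monodromy established just above, this flag is parallel for the flat connection $\delta'$ and is likewise constant in a $\delta'$-horizontal frame. Concretely I would fix local $\delta'$-flat frames of $\mathcal{E}$ over a cover of a neighbourhood of $\mathcal{X}_{t_0}$, represent the Kodaira--Spencer class of $\cEpar$ in a direction $v\in\Hun(X,\TXD)$ by the resulting transition cocycle, and check that it equals the \v{C}ech representative of $\delta(v)\in\Hun(X,\AtpE)$, both having vanishing flag-component in these frames; this forces $\Delta=0$ and hence $\gamma=\delta_*$.

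I expect the main obstacle to be this last step, specifically the behaviour along $\mathcal{D}$: one must verify that, in the Malgrange extension used to build $(\mathcal{E},\delta')$, the residue-eigenspace flags extend to genuinely $\delta'$-parallel objects to first order, so that no spurious flag motion is contributed by the logarithmic poles. This is exactly the point at which the constancy of the local monodromy is indispensable, and it is what makes the parabolic refinement — where the flag and the singularities of the connection are intertwined — go through.
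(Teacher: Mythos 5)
Your proposal is correct and follows essentially the route the paper takes: the paper's own proof simply cites the non-parabolic computation in \cite[p.~131]{BHH1} and asserts that the parabolic case "remains valid after appropriate modifications", and your argument — postcompose with the forgetful map to $\Hun\left(X,\,\Atlog\right)$ to invoke that non-parabolic identity, then kill the residual flag motion (the image of the connecting homomorphism for \eqref{e7}) by computing with $\delta'$-flat frames and the local product structure of the Malgrange extension along $\mathcal{D}$ — is exactly those modifications made explicit. The step you single out as the main obstacle, namely that the residue-eigenspace flags do not move to first order near $\mathcal{D}$, is indeed where isomonodromy enters, and it is supplied by the local constancy of the universal isomonodromic deformation near the singular divisor.
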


\begin{proof} Lemma \ref{lem2} is straightforward to prove; the case without parabolic structure is dealt with in 
\cite[p.~131]{BHH1}. In the presence of parabolic structure it remains valid after appropriate modifications. 
\end{proof}

\section{The isomonodromic deformation contains stable parabolic bundles}

We are now ready to prove the first main result : if the initial connection is irreducible, 
the vector bundle corresponding to a generic fiber of the parameter space in
its (universal) isomonodromic deformation is parabolically stable.

\subsection{A criterion for extending a subbundle to the isomonodromy family}\label{sec:criterion}

Let $\delta$ be a logarithmic connection on $E$ as in \eqref{id}. Assume that $\delta$ is \emph{irreducible} in the sense
that no nonzero subbundle $E'\, \subsetneq\, E$ is preserved by $\delta$.

Let $F\, \subset\, E$ be a subbundle. We have the commutative diagram of sheaves on $X$:
\begin{equation}\label{cd1}\renewcommand{\labelstyle}{\textstyle}
\xymatrix{
& 0\ar[d] & 0\ar[d] \\
 0\ar[r]& \EndpFEpar\ar[d] \ar[r]& \mathrm{At}^F_p(E) \ar[d]\ar[r]&
\TXmD\ar@{=}[d] \ar[r] & 0\\
0 \ar[r] & \EndpE\ar[d]^{\gamma_0} \ar[r]^{\beta}& \AtpE\ar[d]^{\gamma_1} \ar[r]^{\sigma'} & \TXmD \ar[r]& 0\\
& \EndpE/\EndpFEpar\ar[d]\ar@{=}[r] & \EndpE/\EndpFEpar\ar[d]\\
& 0 & 0
 }
\end{equation}
 where the top short exact sequence is the one in \eqref{z1} and the short exact sequence
at the bottom is the one in \eqref{e8}. Consider the composition homomorphism
\begin{equation}\label{xi}
\TXmD \, \stackrel{\delta}{\longrightarrow}\, \AtpE
\, \stackrel{\gamma_1}{\longrightarrow}\, \EndpE/\EndpFEpar
\end{equation}
(Lemma \ref{le-1} says that the image of
$\delta$ is in $\AtpE$); this composition homomorphism will be
denoted by $f_0$. Since $F$ is not preserved by the connection $\delta$ by the
irreducibility assumption, we have
$$f_0\, \not=\, 0\, .$$ Let
\begin{equation}\label{cl}
{\mathcal L}\, \subset\, \EndpE/\EndpFEpar
\end{equation}
be the holomorphic line subbundle generated by the image $f_0\left(\TXmD\right)$. We note that
${\mathcal L}$ coincides with the inverse image, in $\EndpE/\EndpFEpar$,
of the torsion part
$$
(\left(\EndpE/\EndpFEpar\right)/f_0\left(\TXmD\right))_{\rm torsion}\, \subset\,
\left(\EndpE/\EndpFEpar\right)/f_0\left(\TXmD\right)$$ under the
quotient map $\EndpE/\EndpFEpar\,\longrightarrow\, \left(\EndpE/\EndpFEpar\right)/
f_0\left(\TXmD\right)$.

Now define
\begin{equation}\label{s1}
\text{End}^\delta_p\left(\Epar\right)\, :=\, \gamma^{-1}_0\left({\mathcal L}\right)
\, \subset\, \EndpE\ \ \text{ and } \ \
\mathrm{At}^\delta_p(E)\, :=\, \gamma^{-1}_1\left({\mathcal L}\right)\, \subset\,
\AtpE\, ,
\end{equation}
where $\mathcal L$ is the line subbundle in \eqref{cl}, and $\gamma_0,\, \gamma_1$
are the homomorphisms in \eqref{cd1}. Note that from \eqref{cd1} we have
the following commutative diagram of sheaves on $X$:
\begin{equation}\label{cd2}
\renewcommand{\labelstyle}{\textstyle}
\xymatrix{
& 0\ar[d] & 0\ar[d] \\
0 \ar[r] & \EndpFEpar \ar[d] \ar[r] & \mathrm{At}^F_p(E)\ar[d]^\mu \ar[r] & \TXmD\ar@{=}[d] \ar[r] & 0\\
0 \ar[r] & \text{End}^\delta_p\left(\Epar\right) \ar[d] \ar[r] &
\mathrm{At}^\delta_p(E)\ar[d]^{\gamma '} \ar[r]^{\sigma''} & \TXmD \ar[r] & 0\\
& {\mathcal L} \ar[d] \ar@{=}[r] & {\mathcal L}\ar[d] \\
 & 0 & 0
 }
\end{equation}
where $\sigma''$ and $\gamma'$ respectively are the restrictions of the homomorphisms
$\sigma'$ and $\gamma_1$ constructed in \eqref{cd1}.

{}From the definition of $\mathrm{At}^\delta_p(E)$ in \eqref{s1} it follows immediately that
the image of the connection homomorphism 
$\TXmD \, \stackrel{\delta}{\longrightarrow}\, \AtpE$ is contained
in the subbundle $\mathrm{At}^\delta_p(E)$.
Let
\begin{equation}\label{xi2}
\xi\, :\, \TXmD \, \longrightarrow\, \mathcal L
\end{equation}
be the homomorphism given by the composition $f_0$ in \eqref{xi}.

Consider the family of parabolic bundles 
$$
\cEpar\, \longrightarrow\, {\mathcal X}
\, \stackrel{p}{\longrightarrow}\,{\mathcal T}_{g,n}
$$
constructed in \eqref{e10} using $\delta'$ (which is constructed from $\delta$).
From the commutative diagram in \eqref{cd2} we can now deduce the following proposition. 
 
\begin{proposition}\label{prop1}
If the subbundle $F\, \subset\, E$ extends to a subbundle ${\mathcal F}$ of $ {\mathcal E}$ over the
first order infinitesimal neighborhood of the point $t_0\, \in\, \mathcal{Y}$, where $\mathcal{Y}$ is a closed analytic subset of ${\mathcal T}_{g,n}$,
then the homomorphism defined by the composition 
$$
\mathrm{T}_{t_0}{\mathcal Y}\, \hookrightarrow\, \mathrm{T}_{t_0}{\mathcal T}_{g,n}=\Hun\left(X,\, \TXmD\right)\, \stackrel{ \xi_*}{\longrightarrow}\, \Hun\left(X,\, {\mathcal L}\right)\, ,
$$
induced by $\xi$ in \eqref{xi2}, vanishes identically.
\end{proposition}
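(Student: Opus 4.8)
The plan is to exploit the factorization $\xi\,=\,\gamma'\circ\delta$ together with the exactness of the middle column of \eqref{cd2}. First I would record that, because the image of $\delta\,\colon\,\TXmD\,\longrightarrow\,\AtpE$ lies in the subsheaf $\mathrm{At}^\delta_p(E)\,=\,\gamma_1^{-1}\left({\mathcal L}\right)$ (as noted immediately after \eqref{cd2}), the connection induces a homomorphism $\TXmD\,\longrightarrow\,\mathrm{At}^\delta_p(E)$, and the homomorphism $\xi$ of \eqref{xi2} is precisely the composite $\TXmD\,\stackrel{\delta}{\longrightarrow}\,\mathrm{At}^\delta_p(E)\,\stackrel{\gamma'}{\longrightarrow}\,{\mathcal L}$; indeed $\gamma'$ is the restriction of $\gamma_1$ and $f_0\,=\,\gamma_1\circ\delta$ in \eqref{xi}. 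Passing to first cohomology yields $\xi_*\,=\,\gamma'_*\circ\delta_*$, where now $\delta_*$ is taken with values in $\Hun\left(X,\,\mathrm{At}^\delta_p(E)\right)$. By Lemma \ref{lem2} this refined $\delta_*$ is the classifying homomorphism of the isomonodromic family of parabolic bundles, lifted along the inclusion $\mathrm{At}^\delta_p(E)\,\hookrightarrow\,\AtpE$.

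Next I would bring in the hypothesis. Fix $v\,\in\,\mathrm{T}_{t_0}{\mathcal Y}$. The extension ${\mathcal F}\,\subset\,{\mathcal E}$ over the first order neighborhood of $t_0$ inside ${\mathcal Y}$ furnishes, in the direction $v$, a first order deformation of the quadruple $\left(X,\,D,\,\Epar,\,F\right)$ with parabolic data of fixed type. By Lemma \ref{lem3} this deformation is classified by some $\widetilde{v}\,\in\,\Hun\left(X,\,\mathrm{At}^F_p(E)\right)$, and forgetting $F$ recovers the deformation of $\left(X,\,D,\,\Epar\right)$ underlying the isomonodromic family in direction $v$. At the level of \v{C}ech cochains the forgetful map is simply the inclusion $\mu\,\colon\,\mathrm{At}^F_p(E)\,\hookrightarrow\,\mathrm{At}^\delta_p(E)$ of \eqref{cd2}, so the classifying cocycle of $\left(X,\,D,\,\Epar\right)$ in direction $v$ can be represented with values already in the subsheaf $\mathrm{At}^F_p(E)$. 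Concretely, this says that $\mu_*\left(\widetilde{v}\right)\,=\,\delta_*\left(v\right)$ in $\Hun\left(X,\,\mathrm{At}^\delta_p(E)\right)$.

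Finally I would invoke the exact sequence. The middle column of \eqref{cd2},
$$
0\,\longrightarrow\,\mathrm{At}^F_p(E)\,\stackrel{\mu}{\longrightarrow}\,\mathrm{At}^\delta_p(E)\,\stackrel{\gamma'}{\longrightarrow}\,{\mathcal L}\,\longrightarrow\,0\, ,
$$
satisfies $\gamma'\circ\mu\,=\,0$, whence $\gamma'_*\circ\mu_*\,=\,0$ on first cohomology. Combining the three steps gives
$$
\xi_*\left(v\right)\,=\,\gamma'_*\left(\delta_*\left(v\right)\right)\,=\,\gamma'_*\left(\mu_*\left(\widetilde{v}\right)\right)\,=\,0\, ,
$$
and as $v\,\in\,\mathrm{T}_{t_0}{\mathcal Y}$ was arbitrary, the composite $\mathrm{T}_{t_0}{\mathcal Y}\,\hookrightarrow\,\mathrm{T}_{t_0}{\mathcal T}_{g,n}\,\stackrel{\xi_*}{\longrightarrow}\,\Hun\left(X,\,{\mathcal L}\right)$ vanishes identically.

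I expect the middle step to be the main obstacle: one must argue that ``$F$ extends to first order'' produces a lift of the isomonodromic deformation class already inside $\Hun\left(X,\,\mathrm{At}^\delta_p(E)\right)$, and not merely inside $\Hun\left(X,\,\AtpE\right)$. Working only in $\Hun\left(X,\,\AtpE\right)$ one obtains the weaker conclusion $\xi_*\left(v\right)\,\in\,\ker\left(\Hun\left(X,\,{\mathcal L}\right)\,\longrightarrow\,\Hun\left(X,\,\EndpE/\EndpFEpar\right)\right)$, which is the image of a connecting homomorphism and need not vanish when $(\mathrm{rank}\,F)(r-\mathrm{rank}\,F)\,>\,1$. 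It is precisely the geometric origin of the classifying cocycle, computed from the actual family through $\delta$ whose image sits in $\mathrm{At}^\delta_p(E)$, that removes this ambiguity and forces $\xi_*\left(v\right)\,=\,0$.
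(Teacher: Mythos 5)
Your argument is correct and is essentially the paper's own proof: both factor $\xi_*$ as $\gamma'_*\circ\delta_*$, use the first-order extension of $F$ to produce a class in $\Hun\left(X,\,\mathrm{At}^F_p(E)\right)$ whose image under $\mu_*$ equals $\delta_*\left(v\right)$, and conclude from $\gamma'\circ\mu\,=\,0$ in the middle column of \eqref{cd2}. The ``main obstacle'' you flag --- that the identity $\mu_*\left(\widetilde v\right)\,=\,\delta_*\left(v\right)$ must hold in $\Hun\left(X,\,\mathrm{At}^\delta_p(E)\right)$ and not merely after pushing into $\Hun\left(X,\,\AtpE\right)$ --- is precisely the commutativity the paper asserts in its diagram via Lemmas \ref{lem2} and \ref{lem3}, resting on the cocycle-level description of the isomonodromic classifying map through $\delta$.
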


\begin{proof}
Assume that the subbundle $F\, \subset\, E$ extends
to the first order infinitesimal neighborhood of $t_0\, \in\, \mathcal{Y}\,\subset\,
{\mathcal T}_{g,n}$. Consequently, we have a classifying homomorphism
$$\mathrm{cl}_{(X,D,\Epar,F)} \,:\, \mathrm{T}_{t_0}{\mathcal Y}\,
\longrightarrow\, \Hun\left(X,\, {\rm At}^F_p(E)\right)$$ to the space of infinitesimal
deformations of $(X,\,D,\,\Epar,\,F)$ (that is of quadruples given by curve, punctures,
parabolic bundle and subbundle in the isomonodromic deformation). Denoting forgetful morphisms
simply by ``$\circ$'', and also adopting a similar notation for the other classifying maps, by
Lemma \ref{lem3} and lemma \ref{lem2}, the following diagram of homomorphisms is commutative:
$$\xymatrix{
&& \Hun\left(X,{\rm At}^F_p(E)\right) \ar[r]^{\mu_*} \ar[d]^{\circ}
& \Hun\left(X,{\rm At}^\delta_p(E)\right)\ar[r]^{\ \ \ \gamma'_*}\ar[dl]^{\circ} 
& \Hun\left(X,\mathcal{L}\right)\, .
\\
\mathrm{T}_{t_0}{\mathcal Y} \ar@{_{(}->}[d]\ar@/^2pc/[urr]^{\mathrm{cl}_{(X,D,\Epar,F)}}\ar@/^1pc/[drr]_{\mathrm{cl}_{(X,D)}}\ar@/^1pc/[rr]^{\mathrm{cl}_{(X,D,\Epar)}}
&& \Hun\left(X,\, {\rm At}_p(E)\right)\ar[d]^{\circ}
\\
\mathrm{T}_{t_0}{\mathcal T}_{g,n} \ar@{=}[rr]
&& \Hun\left(X,\, \TXD\right)\ar@/_3pc/[uurr]^{\xi_*}\ar@/_2pc/[uur]^{\delta_*}
}$$
(in the above diagram ``$\circ$'' denotes the homomorphisms of cohomologies induced by
the natural inclusions of coherent sheaves).
The result now simply follows from the fact that the top row is exact according to \eqref{cd2}.
\end{proof}

\begin{theorem}\label{thm1}
Let $X$ be a compact Riemann surface of genus $g\,\geq\, 2$, and let $D$ be a divisor on $X$. 
Let $\delta$ be an irreducible logarithmic connection, singular over $D$, on a holomorphic
vector bundle $E\,\longrightarrow\, X$. Consider the family of parabolic vector bundles
$$
\cEpar\, \longrightarrow\, {\mathcal X}
\, \stackrel{p}{\longrightarrow}\,{\mathcal T}_{g,n}$$
underlying the isomonodromic deformation of $\left(E,\, \delta\right)$ as in
Section \ref{sec:parfamily}, and denote, for any $t\,\in\, {\mathcal T}_{g,n}$, by $\cEpar^t$ the
corresponding parabolic
vector bundle over $\mathcal{X}_t\,=\,p^{-1}\left(t\right)$ with parabolic structure over
the divisor $\left(s_1(t),\, \cdots\, , s_n(t) \right)$.
Denote $$\begin{array}{rcl}{\mathcal Y}&:=&\{t\in {\mathcal T}_{g,n}~|~\cEpar^t \textrm{
is not parabolically semistable.} \}\vspace{.2cm}\\{\mathcal Y'}&:=&\{t\in {\mathcal T}_{g,n}~|~\cEpar^t \textrm{
is not parabolically stable.} \}\end{array}$$
Then ${\mathcal Y}$ and ${\mathcal Y'}$ are closed analytic subsets of ${\mathcal T}_{g,n}$, whose codimensions are bounded as follows: 
$$\mathrm{codim}\left({\mathcal Y}\right)\geq g\, ; \quad \mathrm{codim}\left({\mathcal Y}'\right)\geq g-1\, .$$
\end{theorem}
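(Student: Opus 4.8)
The plan is to deduce the codimension bounds from Proposition~\ref{prop1} by reducing them to a cohomological estimate for the line bundle $\mathcal{L}$ in \eqref{cl}. That $\mathcal{Y}$ and $\mathcal{Y}'$ are closed analytic subsets is already available from \cite{Nitsure}, so only the codimension estimates require work. The first point I would use is the homogeneity of the construction: $(\mathcal{E},\delta')$ is the isomonodromic deformation based at \emph{any} of its points, and because the monodromy representation is constant along the family, $\delta^t$ is irreducible for every $t$ as soon as $\delta$ is (an invariant subspace of the monodromy would produce a $\delta$-invariant subbundle). Hence, to bound the codimension of a top-dimensional component $\mathcal{Y}_0$ of $\mathcal{Y}$ (resp.\ $\mathcal{Y}'$), I may move the base point $t_0$ to a generic smooth point of $\mathcal{Y}_0$, at which $\cEpar^{t_0}=\Epar$ fails to be parabolically stable (resp.\ semistable).

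At such a generic $t_0$ I would choose a canonical destabilizing subbundle $F\subset E$: the maximal destabilizing subbundle of $\Epar$ in the strictly unstable case, and a parabolically stable subbundle $\Fpar$ with $\mu_{\mathrm{par}}(\Fpar)=\mu_{\mathrm{par}}(\Epar)$ in the strictly semistable case. In either case $\Fpar$ is parabolically semistable, and at a generic point of $\mathcal{Y}_0$ the numerical type is constant, so boundedness together with the canonicity of $F$ lets it extend holomorphically over $\mathcal{Y}_0$ near $t_0$, hence to the first-order neighbourhood of $t_0$ inside $\mathcal{Y}_0$. Proposition~\ref{prop1} then gives $\mathrm{T}_{t_0}\mathcal{Y}_0\subseteq\ker(\xi_*)$, so that $\mathrm{codim}(\mathcal{Y})\geq \mathrm{rank}(\xi_*)$. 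Writing the saturation inclusion of \eqref{xi2} as a short exact sequence \[ 0\,\longrightarrow\,\TXmD\,\stackrel{\xi}{\longrightarrow}\,\mathcal{L}\,\longrightarrow\,Q\,\longrightarrow\,0 \] with $Q$ a torsion sheaf, the associated long exact sequence (using $\Hun(X,Q)=0$) shows that $\xi_*$ is surjective; thus $\mathrm{rank}(\xi_*)=\dim \Hun(X,\mathcal{L})=\dim \Hze(X,K_X\otimes\mathcal{L}^{-1})$ by Serre duality.

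It then remains to bound $\degree\,\mathcal{L}$ from above. Over $X\setminus D$ the inclusion $\mathcal{L}\hookrightarrow \EndpEpar/\EndpFEpar$ is a line subbundle of $\mathrm{Hom}(\Fpar,\Epar/\Fpar)$, and hence provides a nonzero homomorphism $\Fpar\to(\Epar/\Fpar)\otimes\mathcal{L}^{-1}$. Its image is a quotient of the parabolically semistable $\Fpar$, so has parabolic slope $\geq \mu_{\mathrm{par}}(\Fpar)$, and is a subsheaf of $(\Epar/\Fpar)\otimes\mathcal{L}^{-1}$; comparing parabolic slopes yields $\degree\,\mathcal{L}\leq \mu_{\mathrm{par}}^{\max}(\Epar/\Fpar)-\mu_{\mathrm{par}}(\Fpar)$. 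In the strictly unstable case the maximality of $F$ forces $\mu_{\mathrm{par}}^{\max}(\Epar/\Fpar)<\mu_{\mathrm{par}}(\Fpar)$, so $\degree\,\mathcal{L}\leq -1$ and $\dim \Hze(X,K_X\otimes\mathcal{L}^{-1})\geq g$, giving $\mathrm{codim}(\mathcal{Y})\geq g$. In the strictly semistable case, applying semistability of $\Epar$ to the preimage in $E$ of the image subsheaf of $\Epar/\Fpar$ gives instead $\degree\,\mathcal{L}\leq 0$, hence $\dim \Hze(X,K_X\otimes\mathcal{L}^{-1})\geq g-1$ and $\mathrm{codim}(\mathcal{Y}')\geq g-1$.

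The main obstacle I expect is the parabolic degree estimate of the third paragraph: one must track the parabolic weights and the saturations carefully so that the parabolic slope inequalities remain valid for the structure induced at $D$ by the residues, and so that the dichotomy between the strict ($\degree\,\mathcal{L}\leq-1$) and non-strict ($\degree\,\mathcal{L}\leq 0$) bounds is correctly matched to $\mathcal{Y}$ and $\mathcal{Y}'$. A secondary technical point is to justify, via boundedness and constancy of the Harder--Narasimhan type at a generic point of $\mathcal{Y}_0$, that the canonical destabilizing subbundle genuinely extends to first order, which is what licenses the application of Proposition~\ref{prop1}.
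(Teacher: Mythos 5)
Your proposal is correct and follows the same architecture as the paper's proof: closedness of $\mathcal{Y},\mathcal{Y}'$ from \cite{Nitsure}, passage to a generic point of a top-dimensional component where a canonical destabilizing subbundle extends to first order, Proposition \ref{prop1} to place $\mathrm{T}_{t_0}\mathcal{Y}_0$ inside $\ker(\xi_*)$, surjectivity of $\xi_*$ onto $\Hun\left(X,\mathcal{L}\right)$ via the torsion cokernel of $\xi$, and Riemann--Roch plus Serre duality converting $\degree\,\mathcal{L}\leq -1$ (resp.\ $\leq 0$) into the bounds $g$ (resp.\ $g-1$). The one step where you genuinely diverge is the degree estimate for $\mathcal{L}$: the paper obtains it from the Harder--Narasimhan (resp.\ socle) filtration of the parabolic endomorphism bundle $\mathrm{End}(\Epar)$ together with \cite{AAB}, whereas you argue directly from the nonzero induced homomorphism $\Fpar\to\left(\Epar/\Fpar\right)\otimes\mathcal{L}^{-1}$ and a parabolic slope comparison. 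Your route is more elementary and self-contained, at the cost of having to track by hand the parabolic structures induced on $\EndpE/\EndpFEpar$ near $D$ --- the technical point you correctly single out, and which the paper sidesteps by staying inside the intrinsically parabolic filtrations of $\mathrm{End}(\Epar)$. Two minor remarks: the ``(resp.)'' in your first paragraph is transposed ($\mathcal{Y}$ is the non-\emph{semistable} locus and $\mathcal{Y}'$ the non-stable one), although your subsequent case analysis matches the strict/non-strict dichotomy to $\mathcal{Y}$ and $\mathcal{Y}'$ correctly; and in the strictly semistable case an arbitrary stable subbundle of equal parabolic slope is not canonical, so for the first-order extension step you should (as the paper implicitly does via the socle) work with a canonically defined destabilizing object.
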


\begin{proof}
The mechanics of the proof of this theorem are identical to the proofs of Proposition 5.3 of \cite[p.~138]{BHH1} (concerning $\mathcal{Y}$) and Proposition 5.4 of \cite[p.~139]{BHH1} (concerning $\mathcal{Y}'$) up to some minor modifications. We will therefore be brief.
The fact that $\mathcal{Y}$ and $\mathcal{Y}'$ defined as in the statement are closed analytic subsets of ${\mathcal T}_{g,n}$ follows from \cite{Nitsure}. Indeed, one can write $ \mathcal{Y}'$ as a union of strata corresponding to types $k$ of nontrivial Harder-Narasimhan filtrations, and the results of \cite{Nitsure} tell us that 
the union of strata corresponding to types greater or equal to a fixed $k$ forms a closed subset. On the other hand, within the moduli space of semi-stable objects, stable ones form an open subset.
Let $0\neq F\subset E$ be a destabilizing subbundle, i.e., 
$$\frac{\textrm{par-deg}\left(\Fpar\right)}{\mathrm{rank}\left(\Fpar\right)}\,>\,
\frac{\textrm{par-deg}\left(\Epar\right)}{\mathrm{rank}\left(\Epar\right)}\, ,
\quad (\textrm{respectively,} \quad \frac{\textrm{par-deg}\left(\Fpar\right)}{\mathrm{rank}
\left(\Fpar\right)}\,\geq\, \frac{\textrm{par-deg}\left(\Epar\right)}{\mathrm{rank}\left(\Epar\right)}).$$
Then, as is Section \ref{sec:criterion}, we have a short exact sequence of sheaves on $X$
\begin{equation}\label{eqtor}0\longrightarrow \TXmD\stackrel{\xi}{\longrightarrow}\mathcal{L}
\longrightarrow T^\delta\longrightarrow 0\, ,\end{equation} where $T^\delta$ is a torsion sheaf because $\xi\neq 0$ by
irreducibility of $\delta$. 

We will show that
\begin{equation}\label{ze1}
\mathrm{degree} (\mathcal{L} )<0\, , \quad
(\textrm{respectively,} \quad \mathrm{degree} (\mathcal{L} )\leq 0)
\end{equation}
in the stable (respectively, semistable) case.

For this first consider the Harder-Narasimhan filtration of the parabolic endomorphism bundle ${\rm End}(\Epar)\,=\,
\Epar\otimes E^*_\star$. Let $W_\star\, \subset\, {\rm End}(\Epar)$ is the part of this filtration for nonnegative parabolic
weights. Then all the successive quotients of the Harder-Narasimhan filtration of the quotient parabolic bundle
${\rm End}(\Epar)/W_\star$ have negative parabolic degree. On the other hand, when $\Epar$ is parabolic
semistable, for the socle filtration of ${\rm End}(\Epar)$, all the successive quotients of the filtration have
parabolic degree to be zero. In the stable case, $\mathcal{L}$ is a subsheaf of the quotient parabolic bundle
${\rm End}(\Epar)/W_\star$, and hence the parabolic degree of $\mathcal{L}$ with the induced parabolic structure is negative.
This implies that the degree of $\mathcal{L}$ is negative. In the semistable case, $\mathcal{L}$ is a subsheaf of the quotient
of the socle filtration, so the parabolic degree of $\mathcal{L}$ with the induced parabolic structure is nonpositive. Hence
the degree of $\mathcal{L}$ is nonpositive in this case.
Also form the result on p. 705 of \cite{AAB} it follows that that the degree of $\mathcal{L}$ must be negative,
in the stable case, and negative or zero, in the semi-stable case. 

From the long exact sequence associated to the short exact sequence \eqref{eqtor}, one then deduces 
\begin{equation}\label{eqdim}
\mathrm{dim}\left(\xi_*\Hun\left(X,\TXmD\right)\right)\geq g\, , \quad (\textrm{respectively,} \quad 
\mathrm{dim}\left(\xi_*\Hun\left(X,\TXmD\right)\right)\geq g-1)\, .
\end{equation}
Up to replacing $t_0$ by a generic element of 
$\mathcal{Y}$ respectively, $\mathcal{Y}'$, we may assume that in the infinitesimal neighborhood of $t_0$ in $\mathcal{Y}$ 
respectively, $\mathcal{Y}'$, the destabilizing subbundle $F$, which we take to be maximal, in the Harder-Narasimhan sense, 
extends; this follows from the picture of $\mathcal{Y}$ respectively, $\mathcal{Y}'$ as a union of strata. Then Proposition 
\ref{prop1}, in combination with \eqref{eqdim}, yields the desired estimate for the codimension. \end{proof}
 
\section{Infinitesimal deformations of parabolic Higgs bundles}

This section is dedicated to prove our second main result: in the rank two case, if the initial 
connection is irreducible, the vector bundle corresponding to a generic fiber of the parameter 
space in the (universal) isomonodromic deformation is parabolically very stable. We shall 
proceed in a way similar to what lead to the first main result. Namely, after recalling the 
basic definitions, we will establish the deformation theory of parabolic Higgs bundles over 
varying base curves, as well as the obstruction space of deformations of non-zero nilpotent 
Higgs fields. These results will then be applied to the isomonodromic deformation.

Let $\left(X,\,D\right)$ be as before a compact Riemann surface of genus $g\,\geq\, 2$ endowed with
$n$ ordered marked points. 
Let $\Epar $ be a vector bundle $E\,\longrightarrow\, X$ endowed with a parabolic structure over $D$ as before. However, from
now on we will always assume that 
$$\text{rank}(E)\, =\, 2\, .$$

For each $x_j\in D$, the parabolic filtration of $E_{x_j}$ in \eqref{qf} then is of length $n_j\leq 2$.

\subsection{Very stable parabolic Higgs bundles}\label{sec:verystable}

Let us recall the notion of $\Epar$ being parabolically very stable. 

Consider the vector bundle 
 $\EndpE$ in \eqref{eq:defEndpE} and define
\begin{equation}\label{ctb}
\EndzepE\, \subset\, \EndpE
\end{equation}
to be the coherent subsheaf defined by the endomorphisms that are nilpotent 
with respect to the quasiparabolic filtration over every point of $D$, \emph{i.e.,} for a
section $s$ of $\EndzepE$, we have
$$
s\left(E^i_j\right)\, \subseteq\, E^{i+1}_j
$$
for all $x_j\, \in\, D$ in the domain of definition of $s$ and all $1\,\leq\, i\,\leq\, n_j$ (as in \eqref{qf}).

\begin{remark}
Since $\EndE^\vee\, =\, \EndE$ with the isomorphism given by the bilinear pairing defined by $A\otimes B\, \longmapsto\,
\text{trace}(AB)$, we have a fiberwise nondegenerate pairing
$$
(\EndE\otimes {\mathcal O}_X(D))\otimes (\EndE\otimes {\mathcal O}_X(D))\, \longrightarrow\,
{\mathcal O}_X(2D)
$$
given by trace. For this pairing, the image of $\EndpE\otimes (\EndzepE\otimes \mathcal{O}_X(D))$ is evidently
contained in ${\mathcal O}_X\, \subset\, {\mathcal O}_X(2D)$. It is now straightforward to check that this
restricted pairing produces an isomorphism
\begin{equation}\label{n1}
\EndpE^\vee\,=\, \EndzepE\otimes \mathcal{O}_X(D)\, .
\end{equation}
\end{remark}

A \textit{Higgs field} on a parabolic vector bundle $\Epar$ is a holomorphic section of
$\EndzepE\otimes \KX\otimes \mathcal{O}_X(D)$, where $\EndzepE$ is the
vector bundle constructed in \eqref{ctb}. A \textit{Higgs bundle} is a pair $\left(\Epar,\, \theta\right)$, where $\Epar$
is a parabolic vector bundle and $\theta$ is a Higgs field on $\Epar$. The Higgs field $$
\theta\, \in\, \Hze\left(X,\, \EndzepE\otimes \KX\otimes \mathcal{O}_X(D)\right)
$$ is
called \emph{nilpotent} if $\theta^2\,=\,0$. 
A parabolic Higgs bundle $\left(\Epar,\, \theta\right)$ is called {nilpotent} if $\theta$ is nilpotent.

A parabolic vector bundle $\Epar$ is called \textit{parabolically very stable} if it does not admit any nonzero
nilpotent Higgs field. It can be proved that a parabolically very stable vector bundle $\Epar$ is automatically parabolically stable.
To prove this, assume that $\Epar$ is not stable. then there is a line subbundle $L\, \subset\, E$ such that
\begin{equation}\label{inq}
\text{par-deg}\left(\Lpar\right)\,\geq\, \frac{\text{par-deg}\left(\Epar\right)}{2}\, ,
\end{equation}
where $\Lpar$ is the parabolic line bundle given by the parabolic structure on $L$ induced by the
parabolic structure on $\Epar$. 
Denote $D'\,:=\, \{x_j\in D~|~E_j^2\neq \{0\}\} $ and 
\begin{equation}\label{eqDL} D_L\,:=\,\{x_j\,\in \, D'~\mid ~L_{x_j}\,=\, E_{j}^2\}\, .\end{equation}

From \eqref{inq} it follows that 
$$\text{degree}\left(\text{Hom}\left(E/L\, ,\, L\right)\otimes \mathcal{O}_X(D_L)\right) \geq \text{degree}(D_L)+\sum_{D'-D_L}(\alpha_j^2 -\alpha_j^1) -\sum_{D_L}(\alpha_j^2 -\alpha_j^1)\geq 0\, .$$
Then the line bundle $\text{Hom}\left(E/L\, ,\, L\right)\otimes \KX\otimes \mathcal{O}_X(D')$ has a non-zero holomorphic section
by Riemann--Roch theorem. A nonzero holomorphic section $\zeta$ of $\text{Hom}\left(E/L\, ,\, L\right)\otimes \KX\otimes \mathcal{O}_X(D')$
defines a nonzero nilpotent Higgs field on $\Epar$ using the composition
$$
E\,\longrightarrow\, E/L\,\stackrel{\zeta}{\longrightarrow}\, L\otimes \KX\otimes \mathcal{O}_X(D')\,\longrightarrow\, E
\otimes \KX\otimes \mathcal{O}_X(D')\, ,
$$
where $\,\longrightarrow\, E/L$ is the quotient map; the other homomorphism
$L\otimes \KX\otimes \mathcal{O}_X(D')\,\longrightarrow\, E
\otimes \KX\otimes \mathcal{O}_X(D')$ is the tensor product of the inclusion $L\, \hookrightarrow\, E$ with the
identity map of $\KX\otimes \mathcal{O}_X(D')$. Therefore, $\Epar$ is not parabolically very stable.

Note that the kernel of the above composition homomorphism is precisely $L$. 

\subsection{Infinitesimal deformations of a parabolic Higgs bundle on a fixed curve}\label{sec:Higgsfixedcurve}

Let $\left(\Epar,\, \theta\right)$ be a parabolic Higgs bundle of rank 2 over a fixed pointed curve $\left(X,D\right)$. As recalled 
in Section \ref{sec:fixed curve}, the infinitesimal deformations of $\Epar$ are parametrized by $\Hun\left(X,\, \EndpE\right)$. 
These of course need to be reflected in the infinitesimal deformations of the pair $\left(\Epar,\, \theta\right)$. Using Serre 
duality, and \eqref{n1}, the dual of the space of infinitesimal deformations of $\Epar$ is $\Hze\left(X,\, \EndzepE\otimes 
\OXD\otimes \KX\right)$, where $\KX$ is the holomorphic cotangent bundle of $X$. As shown in \cite{BR}, this dual space corresponds 
to the infinitesimal deformations of Higgs fields $\theta$ on a fixed parabolic bundle $\Epar$. Let us recall how these two 
infinitesimal deformation spaces fit together to construct the infinitesimal deformation space of pairs $\left(\Epar,\, 
\theta\right)$.

Let
\begin{equation}\label{ft}
f_\theta\, :\, \EndpE\, \longrightarrow\, \EndzepE
\otimes \KX\otimes \OXD
\end{equation}
be the homomorphism defined by $A\, \longmapsto\, \theta\circ A- A\circ\theta$. Now we have a two-term complex
${\mathcal C}^{\left(\Epar,\theta\right)}_{\bullet}$ of sheaves on $X$
$$
{\mathcal C}^{\left(\Epar,\theta\right)}_0\,:=\, \EndpE \, \stackrel{f_\theta}{\longrightarrow}\,
{\mathcal C}^{\left(\Epar,\theta\right)}_1\,:=\, \EndzepE \otimes \KX\otimes \OXD\, .
$$
The infinitesimal deformations of $\left(\Epar,\, \theta\right)$, keeping $\left(X,\, D\right)$ fixed, are parametrized by
the hypercohomology ${\mathbb H}^1\hskip-3pt \left({\mathcal C}^{\left(\Epar,\theta\right)}_{\bullet}\right)$ \cite{BR}. Consider the following short
exact sequence of complexes.
$$\renewcommand{\labelstyle}{\textstyle}
\xymatrix{
0\ar[d] & & 0\ar[d]\\
 0\ar[d]\ar[rr] & & \EndzepE\otimes \KX\otimes \OXD\ar@{=}[d]\\ 
 \EndpE\ar@{=}[d]\ar[rr]^{f_\theta \ \ \ \ \ }&& \EndzepE
\otimes \KX\otimes \OXD\ar[d]\\
 \EndpE\ar[d]\ar[rr]& & 0\ar[d]\\
 0 & & 0
 }
$$
It produces an exact sequence of hypercohomologies
$$
\Hze\left(X,\, \EndzepE\otimes \KX\otimes \OXD\right) \, \stackrel{a}{\longrightarrow}\,
{\mathbb H}^1\hskip-3pt \left({\mathcal C}^{\left(\Epar,\theta\right)}_{\bullet}\right) \, \stackrel{b}{\longrightarrow}\,
\Hun\left(X,\, \EndpE\right)\, .
$$
The above homomorphism $a$ corresponds to changing the Higgs field keeping $\Epar$ fixed, and $b$
corresponds to the forgetful map that sends an infinitesimal deformation of $\left(\Epar,\, \theta\right)$ to the corresponding
infinitesimal deformation of $\Epar$ by simply forgetting $\theta$.

\subsection{Infinitesimal deformations of a parabolic Higgs bundle on moving curve}

In Section \ref{sec:Higgsfixedcurve}, we recalled the infinitesimal deformation space of parabolic Higgs fields with fixed pointed base curve. On the other hand, in Section \ref{sec:varbase}, we stated that the infinitesimal deformation space of the triple $\left(X,D,\Epar\right)$ is given by $\Hun\left(X,\AtpE\right)$. We shall now explain how these two spaces fit together to form the infinitesimal deformation space of the quadruple $\left(X,D,\Epar, \theta\right)$.

There is a natural homomorphism
\begin{equation}\label{eta}
\eta\, :\, \AtpE\, \longrightarrow\, \text{Diff}^1_X\left(\EndzepE\otimes
\KX\otimes \OXD,\, \EndzepE\otimes \KX\otimes \OXD\right)\, ,
\end{equation}
where $\AtpE$ is constructed in \eqref{e7}. To construct $\eta$, consider the homomorphism
$$
\Atlog\, \longrightarrow\, \text{Diff}^1_X\left(\EndE\otimes
\KX,\, \EndE\otimes \KX\right)
$$
constructed in \cite[p.~635, (4.1)]{BHH2}, where $\Atlog$ is constructed in \eqref{e2}; in essence, one combines the action on sections of $E, End(E)$ with a Lie derivative on $K$ (but see \cite{BHH2}) . It is
straight-forward to check that this homomorphism produces a homomorphism as in \eqref{eta} (see
Section 4.1 of \cite{BHH2}).
We have the homomorphism
\begin{equation}\label{eta2}
\eta_\theta\, :\, \AtpE\, \longrightarrow\,\EndzepE \otimes \KX\otimes \OXD
\, ,\ \ s \, \longmapsto\, \eta\left(s\right)\left(\theta\right)\, .
\end{equation}

Denote the quadruple $\left(X,\, D,\, \Epar,\, \theta\right)$ by $\underline{z}$.
Let ${\mathcal A}^{\underline{z}}_{\bullet}$ be the following two-term complex
of sheaves on $X$:
$$
{\mathcal A}^{\underline{z}}_0\,:=\, \AtpE \, \stackrel{\eta_\theta}{\longrightarrow}\,
{\mathcal A}^{\underline{z}}_1\,:=\, \EndzepE \otimes \KX\otimes \OXD\, ,
$$
where $\eta_\theta$ is the homomorphism in \eqref{eta2}.
The infinitesimal deformations of $\underline{z}\,=\, \left(X,\, D,\, \Epar,\, \theta\right)$ are parametrized by
the hypercohomology ${\mathbb H}^1\hskip-3pt \left({\mathcal A}^{\underline{z}}_{\bullet}\right)$. Consider the following short
exact sequence of complexes.
$$\renewcommand{\labelstyle}{\textstyle}
\xymatrix{
0 \ar[d] && 0\ar[d] \\
0 \ar[d] \ar[rr] && \EndzepE\otimes \KX\otimes \OXD\ar@{=}[d]\\
 \AtpE \ar@{=}[d] \ar[rr]^{\eta_\theta \ \ \ \ \ \ \ } && \EndzepE
\otimes \KX\otimes \OXD\ar[d]\\
\AtpE \ar[d] \ar[rr] && 0\ar[d]\\
 0 && 0
 }
$$
 It produces an exact sequence of hypercohomologies
$$
\Hze\left(X,\, \EndzepE\otimes \KX\otimes \OXD\right) \, \stackrel{a'}{\longrightarrow}\,
{\mathbb H}^1\hskip-3pt \left({\mathcal A}^{\underline{z}}_{\bullet}\right) \, \stackrel{b'}{\longrightarrow}\,
\Hun\left(X,\, \AtpE\right) \, .
$$
The above homomorphism $a'$ corresponds to changing the Higgs field keeping the triple $\left(X,\, D,\, \Epar\right)$ fixed, and
$b'$ corresponds to the forgetful map that sends an infinitesimal deformation of $\left(X,\, D,\, \Epar,\, \theta\right)$ to the corresponding
infinitesimal deformation of $\left(X,\, D,\, \Epar\right)$ by simply forgetting $\theta$; recall from Lemma \ref{lem1} that
$\Hun\left(X,\, \AtpE\right)$ parametrizes the infinitesimal deformations of $\left(X,\, D,\, \Epar\right)$.

\subsection{Infinitesimal deformations of a nilpotent parabolic Higgs bundle} We shall now construct the obstruction space, \emph{i.e.} when the infinitesimal deformation of a nonzero nilpotent parabolic Higgs field remains nilpotent. 

Let $\left(\Epar,\, \theta\right)$ be a parabolic Higgs bundle of rank 2 over a fixed pointed curve $\left(X,D\right)$ as before. 
Now assume that the Higgs field $\theta$ on $\Epar$ is nonzero nilpotent. Let
$$
L\, :=\, \text{kernel}\left(\theta\right) \, \subset\, E
$$
be the corresponding holomorphic line subbundle and denote $Q:=E/L$ the quotient bundle.
From the exact sequence $$0\,\longrightarrow\, L\,\longrightarrow\, E
\,\longrightarrow\, Q\,\longrightarrow\, 0$$ and its dual sequence, we obtain an exact sequence
$$0\, \longrightarrow\, \mathrm{End}_n^L(E):=Q^\vee\otimes L\, \longrightarrow\, E^\vee\otimes L\oplus Q^\vee\otimes E\, \longrightarrow\,\EndE\, \longrightarrow\, L^\vee\otimes Q\, \longrightarrow\,0\, , $$
factoring through 
$$
\text{End}^L(E)\, :=\,\{s\,\in \,\EndE ~\mid ~s(L)\, \subset\, L\}
 $$
such that we have the following two short exact sequences:
$$\xymatrix{0\ar[r]&\mathrm{End}_n^L(E)\ar[r] &E^\vee\otimes L \oplus Q^\vee\otimes E\ar[r]&\text{End}^L(E) \ar[r]& \,0\ \\
& 
&0\ar[r]&\text{End}^L(E)\ar@{=}[u]\ar[r]&\EndE\ar[r]&L^\vee\otimes Q\ar[r]&0\ . }$$

We note that $\text{rank}\left(\text{End}^L(E)\right)\,=\, 3$, and $\text{rank}\left(\text{End}_n^L(E)\right)\,=\, 1$. The line bundle
$\mathrm{End}_n^L(E)=\mathrm{Hom}(Q,L)$ defined above corresponds to those endomorphisms of $E$ which respect the filtration $0\subset L\subset E$ and which are moreover nilpotent; it is also the kernel of the natural projection $\text{End}^L(E)
\,\longrightarrow\, \text{End}(L)\oplus \text{End}(Q)$. 
Now define $\EndpLEpar\,:=\, \text{End}^L(E)\cap \EndpE\, \subset\,
\EndE$ as in Section \ref{sec:defparsubb} and set
$$
\EndnLEpar\,:=\, \text{End}_n^L(E)\cap \EndpE\, \subset\,
\EndE\, .
$$
We have the following two term complex ${\mathcal D}^{\left(\Epar,\theta\right)}_{\bullet}$ of sheaves on $X$:
$$
{\mathcal D}^{\left(\Epar,\theta\right)}_0\, =\, \EndpLEpar\, \stackrel{f'_\theta}{\longrightarrow}\,
{\mathcal D}^{\left(\Epar,\theta\right)}_1\, =\, \EndnLEpar\otimes \KX\otimes \OXD\, ,
$$
where $f'_\theta$ is the restriction of the homomorphism $f_\theta$ in \eqref{ft}.
The infinitesimal deformations of $\left(\Epar,\, \theta\right)$ in the moduli of nilpotent parabolic Higgs bundles
(keeping $\left(X,\, D\right)$ fixed) are parametrized by ${\mathbb H}^1\hskip-3pt \left({\mathcal D}^{\left(\Epar,\theta\right)}_{\bullet}\right)$
\cite{BR}.

Let
$ 
\AtthpE\, \subset\, \AtpE
$ be as in Section \ref{sec:defparsubb} (with $F=L$). 
 The homomorphism $\eta_\theta$ in
\eqref{eta2} maps $\AtthpE$ to $\text{End}^n_L\left(\Epar\right)\otimes \KX\otimes \OXD$.
As before, denote the quadruple $\left(X,\, D,\, \Epar,\, \theta\right)$ by $\underline{z}$.
We have the following two term complex ${\mathcal B}^{\underline{z}}_{\bullet}$ of sheaves on $X$:
$$
{\mathcal B}^{\underline{z}}_0\, =\, \AtthpE\, \stackrel{\eta'_\theta}{\longrightarrow}\,
{\mathcal B}^{\underline{z}}_1\, =\, \EndnLEpar \otimes \KX\otimes \OXD\, ,
$$
where $\eta'_\theta$ is the restriction of the homomorphism $\eta_\theta$ in \eqref{eta2}.

The infinitesimal deformations of $\underline{z}\,=\, \left(X,\, D,\, \Epar,\, \theta\right)$ in the moduli of nilpotent
parabolic Higgs bundles are parametrized by ${\mathbb H}^1\hskip-3pt \left({\mathcal B}^{\underline{z}}_{\bullet}\right)$.
The morphism ${\mathbb H}^1\hskip-3pt \left({\mathcal B}^{\underline{z}}_{\bullet}\right)
\,\longrightarrow\, {\mathbb H}^1\hskip-3pt \left({\mathcal A}^{\underline{z}}_{\bullet}\right)$ forgetting that the Higgs field remains nilpotent along the infinitesimal deformation is obtained from the morphism of complexes
$$
\xymatrix{{\mathcal B}^{\underline{z}}_0\, =\, \AtthpE\, \ar[r]^{\eta'_\theta\hskip40pt \ }\ar[d]&
{\mathcal B}^{\underline{z}}_1\, =\, \EndnLEpar \otimes \KX\otimes \OXD \ar[d]\, \\
{\mathcal A}^{\underline{z}}_0\, =\, \AtpE\, \ar[r]^{\eta'_\theta\hskip40pt \ }&
{\mathcal A}^{\underline{z}}_1\, =\, \EndzepE \otimes \KX\otimes \OXD\, 
}
$$ induced by the identity. The morphism ${\mathbb H}^1\hskip-3pt \left({\mathcal B}^{\underline{z}}_{\bullet}\right)\,\longrightarrow\,\Hun \left(X,{\rm At}^L_p(E)\right)$ however,
which to a infinitesimal deformation of $\left(X,\, D,\, \Epar,\, \theta\right)$ with nilpotent Higgs field associates the underlying
infinitesimal deformation of $\left(X,\, D,\, \Epar,\, L\right)$ with $L\,=\,\mathrm{kernel}(\theta)$ is obtained from the natural morphism of complexes 

$$
\xymatrix{{\mathcal B}^{\underline{z}}_0\, =\, \AtthpE\, \ar[r]^{\eta'_\theta\hskip40pt \ }\ar[d]&
{\mathcal B}^{\underline{z}}_1\, =\, \EndnLEpar \otimes \KX\otimes \OXD \ar[d]\, \\
 \AtthpE\, \ar[r] &
0 \, ;
}
$$
note that the first hypercohomology space of the complex below coincides with $ \Hun\left(X,{\rm At}^L_p(E)\right)$.

\subsection{The isomonodromic deformation contains very stable parabolic bundles}

We have now established the necessary ingredients of our second main result: 

\begin{theorem}\label{thm2}
Let $X$ be a Riemann surface of genus $g\geq 2$ and let $D$ be a divisor on $X$. 
Let $\delta$ be an irreducible logarithmic connection, singular over $D$, on a rank 2 vector bundle $E\,\longrightarrow\, X$.
Consider the family of parabolic bundles
$$
\cEpar\, \longrightarrow\, {\mathcal X}
\, \stackrel{p}{\longrightarrow}\,{\mathcal T}_{g,n}\, 
$$ underlying the universal isomonodromic deformation of $\left(E, \delta\right)$ as in Section \ref{sec:parfamily} and denote, for any $t\in {\mathcal T}_{g,n}$, by $\cEpar^t$ the corresponding parabolic
vector bundle over $\mathcal{X}_t=p^{-1}\left(t\right)$ with parabolic structure over $\left(s_1(t),\, \cdots , s_n(t)\right)$.
Denote $$\begin{array}{rcl}{\mathcal Y''}&:=&\{t\in {\mathcal T}_{g,n}~|~\cEpar^t \textrm{
~is ~not ~parabolically ~very ~stable.} \} \end{array}$$
Then ${\mathcal Y''}$ is a proper closed analytic subset of ${\mathcal T}_{g,n}$.
\end{theorem}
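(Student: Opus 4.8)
The plan is to mirror the proof of Theorem \ref{thm1}, with the kernel line subbundle of a nonzero nilpotent Higgs field playing the role of the destabilizing subbundle, and with the Higgs deformation theory developed in the preceding subsections replacing the plain parabolic deformation theory. That $\mathcal{Y}''$ is a closed analytic subset is again of the type recalled from \cite{Nitsure}: the failure of very stability is detected by the nonvanishing of $\Hze$ of the family of sheaves whose sections are nilpotent parabolic Higgs fields, so by upper semicontinuity the very stable locus is open. It therefore remains only to prove properness, and, as in Theorem \ref{thm1}, I would argue at the level of tangent spaces, showing that at a generic point $t_0$ one has $\dim \mathrm{T}_{t_0}\mathcal{Y}''\,<\,\dim {\mathcal T}_{g,n}$, whence $\mathcal{Y}''\,\neq\,{\mathcal T}_{g,n}$.

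Suppose, for contradiction, that $\mathcal{Y}''\,=\,{\mathcal T}_{g,n}$ and fix a generic $t_0$. Since $\mathcal{Y}'\,\subseteq\,\mathcal{Y}''$ is proper by Theorem \ref{thm1} (very stable implies stable, by Section \ref{sec:verystable}), I may assume $\cEpar^{t_0}$ is parabolically stable while still carrying a nonzero nilpotent Higgs field $\theta$; let $L\,:=\,\mathrm{kernel}(\theta)\,\subset\, E$ be the associated line subbundle, which by irreducibility of $\delta$ is not $\delta$-invariant. Stratifying $\mathcal{Y}''$ by the discrete type of $(\degree(L),\,\text{parabolic type of }\theta)$ and taking $t_0$ generic in a top-dimensional stratum, the nilpotent Higgs field varies holomorphically along $\mathcal{Y}''$ near $t_0$, so $\theta$ extends, as a nonzero nilpotent Higgs field, over the first order neighbourhood of $t_0$ in $\mathcal{Y}''$. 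The essential reduction is that this forces $L$ itself to extend: combining the extension of $\theta$ with the isomonodromic deformation in the direction $v\,\in\,\mathrm{T}_{t_0}\mathcal{Y}''$ produces a class in ${\mathbb H}^1({\mathcal B}^{\underline{z}}_{\bullet})$ refining $\delta_*(v)\,\in\,\Hun(X,\AtpE)$; pushing it along the morphism ${\mathbb H}^1({\mathcal B}^{\underline{z}}_{\bullet})\,\longrightarrow\,\Hun(X,\AtthpE)$ constructed in the previous subsection shows that the underlying infinitesimal deformation of $(X,D,\cEpar,L)$ lies in the image of $\Hun(X,\AtthpE)\,\longrightarrow\,\Hun(X,\AtpE)$. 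By Lemma \ref{lem3} this says exactly that $L$ extends as a subbundle of $\mathcal{E}$ over the first order neighbourhood of $t_0$ in $\mathcal{Y}''$.

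With $L$ extending, Proposition \ref{prop1} applied to $F\,=\,L$ shows that the composition $\mathrm{T}_{t_0}\mathcal{Y}''\,\hookrightarrow\,\Hun(X,\TXmD)\,\stackrel{\xi_*}{\longrightarrow}\,\Hun(X,{\mathcal L})$ induced by $\xi$ in \eqref{xi2} vanishes identically. It then remains to check that $\xi_*$ is itself nonzero. From the torsion sequence \eqref{eqtor}, since $T^\delta$ is torsion one has $\Hun(X,T^\delta)\,=\,0$, so $\xi_*$ is surjective and $\dim\bigl(\xi_*\Hun(X,\TXmD)\bigr)\,=\,h^1(X,{\mathcal L})$. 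Because $\cEpar^{t_0}$ is stable, the Harder--Narasimhan argument of Theorem \ref{thm1} applies verbatim with $F\,=\,L$ and gives $\text{par-deg}({\mathcal L})\,<\,0$; as the parabolic weights lie in $[0,1)$ this forces $\degree({\mathcal L})\,\leq\,-1$, whence, using $g\,\geq\,2$,
$$
h^1(X,{\mathcal L})\,=\,g-1-\degree({\mathcal L})\,\geq\,g\,\geq\,2\, .
$$
Thus $\mathrm{T}_{t_0}\mathcal{Y}''\,\subseteq\,\ker\xi_*$ is a proper subspace of $\mathrm{T}_{t_0}{\mathcal T}_{g,n}$, contradicting $\mathcal{Y}''\,=\,{\mathcal T}_{g,n}$ and proving that $\mathcal{Y}''$ is proper.

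The main obstacle is the reduction in the second paragraph: verifying that extending a nonzero nilpotent Higgs field forces its kernel subbundle to extend \emph{compatibly} with the isomonodromic deformation. This is precisely where the rank two hypothesis is used, since for $\theta\,\neq\,0$ nilpotent the kernel is a genuine line subbundle, and it is where the hypercohomology bookkeeping of ${\mathcal A}^{\underline{z}}_{\bullet}$ and ${\mathcal B}^{\underline{z}}_{\bullet}$ must be matched against the forgetful maps of Lemmas \ref{lem2} and \ref{lem3}. The stratification step, needed so that a fixed $\theta$ (hence a fixed $L$) extends along $\mathcal{Y}''$ rather than merely some nilpotent field existing on each nearby fibre, also requires care but proceeds exactly as in Theorem \ref{thm1}.
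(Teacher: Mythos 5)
Your overall architecture matches the paper's: show $\mathcal{Y}''$ is closed analytic, pass to a generic $t_0$ where the nonzero nilpotent Higgs field (hence its kernel line subbundle $L$) extends to first order along $\mathcal{Y}''$, and derive a contradiction from the vanishing of the composition $\mathrm{T}_{t_0}\mathcal{Y}''\hookrightarrow \Hun\left(X,\TXD\right)\longrightarrow\Hun\left(X,\mathcal{L}\right)$ combined with the nonvanishing of that map on all of $\Hun\left(X,\TXD\right)$. The gap is in your last step, where you need $\Hun\left(X,\mathcal{L}\right)\neq 0$. You assert that the Harder--Narasimhan argument of Theorem \ref{thm1} applies ``verbatim'' with $F=L$ and yields $\text{par-deg}\left(\mathcal{L}\right)<0$. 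But that argument requires $F$ to be a \emph{destabilizing} subbundle: it is the inequality $\text{par-deg}\left(\Fpar\right)/\mathrm{rank}(F)\geq \text{par-deg}\left(\Epar\right)/\mathrm{rank}(E)$ that places $\mathcal{L}$ inside the negative part of the Harder--Narasimhan filtration of $\mathrm{End}(\Epar)$. Here $L=\mathrm{kernel}(\theta)$ is not destabilizing --- you have explicitly arranged that $\cEpar^{t_0}$ is parabolically stable --- so the inequality points the other way: $\text{par-deg}\left(\Lpar\right)<\text{par-deg}\left(\Epar\right)/2$ makes the parabolic degree of the quotient $\mathrm{Hom}\left(\Lpar,(E/L)_\star\right)$ \emph{positive}, and nothing forces $\mathrm{degree}(\mathcal{L})\leq -1$. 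Your bound $h^1(X,\mathcal{L})\geq g$ is therefore unjustified and false in general; worse, if $\mathrm{degree}(\mathcal{L})$ exceeded $2g-2$ one would have $\Hun\left(X,\mathcal{L}\right)=0$ and the whole contradiction would evaporate, so this step cannot be waved through.

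The paper closes exactly this gap by a different observation, which is where the rank-two hypothesis and the Higgs field are genuinely used. In rank two the full quotient $\EndpE/\EndpLEpar$ is already a line bundle $\mathcal{Q}=L^\vee\otimes (E/L)\otimes\mathcal{O}_X(-D_L)$ (so your $\mathcal{L}$ coincides with $\mathcal{Q}$), and the nonzero nilpotent Higgs field $\theta$, having kernel $L$, induces a nonzero holomorphic section of $\mathcal{Q}^\vee\otimes\KX$. Serre duality then gives $\Hun\left(X,\mathcal{Q}\right)\neq 0$ directly, with no degree estimate: one obtains only $h^1\geq 1$, which suffices for properness of $\mathcal{Y}''$ but yields no codimension bound --- consistent with the theorem claiming none. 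Substituting this Serre-duality argument for your Harder--Narasimhan step repairs the proof; the rest of your outline is sound and follows the paper's route.
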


\begin{proof}
The proof of this theorem is identical to the proof of Theorem 5.2 of \cite[p.~639]{BHH2} after
some minor modifications. We will therefore be brief. Let $\theta$ be a nonzero nilpotent Higgs bundle on the parabolic vector bundle $E_\star$ corresponding to the initial parameter of the isomonodromic deformation. Denote $L:=\mathrm{kernel}(\theta)$ as before and let $D_L$ be as in equation \eqref{eqDL}. Recall the commutative diagram \eqref{cd1} with exact rows and columns: \begin{equation}\label{cdHiggs}
\renewcommand{\labelstyle}{\textstyle}
\xymatrix{
& 0\ar[d] & 0\ar[d] \\
0 \ar[r] & \EndpLEpar \ar[d] \ar[r] & \mathrm{At}^L_p(E)\ar[d]^{\mu_1} \ar[r] & \TXmD\ar@{=}[d] \ar[r] & 0\\
0 \ar[r] & \text{End}_p\left(\Epar\right) \ar[d] \ar[r] &
\mathrm{At}_p(E)\ar[d]^{\gamma_1} \ar[r]^{\sigma'} & \TXmD \ar[r]\ar[dl]^q & 0\\
& \mathcal{Q} \ar[d] \ar@{=}[r] & \mathcal{Q} \ar[d] \\
 & 0 & 0\, .
 }
\end{equation}
Here $$\mathcal{Q}:=L^\vee \otimes Q\otimes \mathcal{O}_X(-D_L)$$ with the notation of \eqref{eqDL}, 
 and $q:=\gamma_1\circ \delta$. 
 Since $\delta$ is irreducible, we have $q\neq 0$. Since $\mathcal{Q}$ is a line bundle, we obtain an exact sequence
$$0\, \longrightarrow\, \TXD \, \stackrel{q}{\longrightarrow}\, \mathcal{Q} \, \longrightarrow\, \mathbb{T}\, \longrightarrow\,0\, , $$
where $\mathbb{T}$ is a torsion sheaf. From the corresponding long exact sequence, we have that the induced 
morphism $$q_* \, :\, \Hun \left(X,\TXD \right) \, \longrightarrow \,\Hun\left(X,\mathcal{Q}\right)$$ of 
cohomology spaces is surjective. Since $\theta$ is nonzero nilpotent with kernel $L$, it induces a non-zero 
section of $\mathcal{Q}^\vee\otimes \KX = \mathrm{Hom}(Q,L)\otimes \mathcal{O}_X(D_L)\otimes \KX \subset 
\mathrm{Hom}(Q,L)\otimes \mathcal{O}_X(D')\otimes \KX$. In particular, using Serre duality, we have 
$\Hun\left(X,\mathcal{Q}\right)\neq \{0\}$. So $q_*$ is nonzero and surjective.

Consider the closed complex analytic subset of the universal moduli of Higgs bundles over ${\mathcal T}_{g,n}$ given by the
kernel of the map $(F,\, \psi)\, \longmapsto\, (\text{trace}(\psi),\, \text{trace}(\psi^2))$
to the universal moduli of forms of degree $1$ and $2$. The ${\mathcal Y''}$ defined as in the statement of the
theorem is the intersection of this closed subset with leaf of the isomonodromic deformation.
Hence ${\mathcal Y''}$ is a closed complex analytic subset of ${\mathcal T}_{g,n}$.
We may assume that in a neighborhood of $t_0\in \mathcal{Y''}$, the non-zero nilpotent Higgs field $\theta$ on $\Epar$ extends to a non-zero nilpotent Higgs field in this neighborhood. 
 Similarly to the proof of Proposition \ref{prop1}, the composition 
$$ T_{t_0}\mathcal{Y}'' \, \hookrightarrow \, \Hun \left(X,\TXD \right)\, \stackrel{q_*}\longrightarrow \,\Hun\left(X,\mathcal{Q}\right)$$
vanishes identically because $\gamma_1\circ \mu_1\,=\,0$. Therefore $\mathcal{Y}'' \neq {\mathcal T}_{g,n}$. 
\end{proof}

\begin{remark}
In the higher rank case, not only does the deformation theory of nilpotent Higgs bundles get much more complicated, 
but the main argument in the proof of Theorem \ref{thm2} breaks down: in arbitrary rank the quotient $\mathcal{Q}$ is not 
necessarily a line bundle and we would need additional information to ensure that $q_*$ is surjective.
\end{remark}

\section*{Acknowledgements}

We thank the referee for comments. The first author is supported by a J. C. Bose Fellowship. The second author 
is supported by ANR-16-CE40-0008.



\begin{thebibliography}{ZZZZZZ}

\bibitem[AAB]{AAB} Anchouche, H. Azad and I. Biswas, {\it Harder-Narasimhan reduction for principal bundles over a
compact K\"ahler manifold}, Math. Ann. \textbf{323}(2002), 693--712

\bibitem[AB]{Bolibruch1} D. Anosov and A. Bolibruch, {\it The Riemann-Hilbert 
problem}, Aspects of Mathematics, E22. Friedr. Vieweg \& Sohn,
Braunschweig, 1994.

\bibitem[At]{At} M. F. Atiyah, Complex analytic connections in fibre bundles, \textit{Trans. 
Amer. Math. Soc.} \textbf{85} (1957), 181--207.

\bibitem[BHH1]{BHH1} I. Biswas, V. Heu and J. Hurtubise, Isomonodromic deformations of 
logarithmic connections and stability, \textit{Math. Ann.} {\bf 366} (2016), 
121--140.

\bibitem[BHH2]{BHH2} I. Biswas, V. Heu and J. Hurtubise, Isomonodromic deformations
and very stable vector bundles of rank two, {\it Comm. Math. Phys.}
{\bf 356} (2017), 627--640.

\bibitem[BHH3]{BHH3} I. Biswas, V. Heu and J. Hurtubise, Isomonodromic deformations of irregular connections
and stability of bundles, {\it Comm. Anal. Geom.} (to appear).

\bibitem[BR]{BR} I. Biswas and S. Ramanan, An infinitesimal study of the moduli of
Hitchin pairs, {\it Jour. London Math. Soc.} {\bf 49} (1994), 219--231.

\bibitem[Bo1]{Bolibruch} A. Bolibruch, On isomonodromic deformations of Fuchsian systems, {\it Journal of Dynamical and
Control Systems} {\bf 3} (1997), 589--604.

\bibitem[Bo2]{Bolibruch2} A. Bolibruch, On sufficient conditions for the 
positive solvability of the Riemann-Hilbert problem, \textit{Mathem. Notes 
of the Acad. Sci. USSR} \textbf{51} (1992), 110--117.

\bibitem[Bo3]{Bolibruch3} A. Bolibruch, The Riemann-Hilbert problem, 
\textit{Russian Math. Surveys} \textbf{45} (1990), 1--58.

\bibitem[Dek]{Dekkers} W. Dekkers, The matrix of a connection having 
regular singularities on a vector bundle of rank 2 on $\mathbb{P}^1 
(\mathbb{C})$, {\it \'Equations diff\'erentielles et syst\`emes de
Pfaff dans le champ complexe} (Sem., Inst. Rech. Math. Avanc\'ee,
Strasbourg, 1975), pp. 33--43, Lecture Notes in Math., 712,
Springer, Berlin, 1979.

\bibitem[Del]{De} P. Deligne, \textit{Equations diff\'erentielles \`a points
singuliers r\'eguliers}, Lecture Notes in Mathematics, Vol. 163, Springer-Verlag,
Berlin-New York, 1970.

\bibitem[DP]{DP} R. Donagi and T. Pantev, Geometric Langlands and non-abelian 
Hodge theory, {\it Surveys in differential geometry. Vol. XIII. Geometry, 
analysis, and algebraic geometry: forty years of the Journal of Differential 
Geometry,} 85--116, International Press, Somerville, MA, 2009.

\bibitem[EH]{Helene1} H. Esnault and C. Hertling, Semistable bundles and 
reducible representations of the fundamental group, \textit{Int. 
Jour. Math.} \textbf{12}, (2001), 847--855.

\bibitem[EV]{Helene2} H. Esnault and E. Viehweg, Semistable bundles on 
curves and irreducible representations of the fundamental group,
{\it Algebraic geometry: Hirzebruch 70} (Warsaw, 1998), 129--138,
Contemp. Math., 241, Amer. Math. Soc., Providence, RI, 1999.

\bibitem[GH]{GH} P. Griffiths and J. Harris, {\it Principles of algebraic geometry},
Pure and Applied Mathematics. Wiley-Interscience, New York, 1978.

\bibitem[GN]{Nitsure} S. R. Gurjar and N. Nitsure, Schematic Harder-Narasimhan
stratification for families of principal bundles and lambda modules,
{\it Proc. Ind. Acad. Sci. (Math. Sci.)} {\bf 124} (2014), 315--332.

\bibitem[He1]{Heu1} V. Heu, Universal isomonodromic deformations of meromorphic 
rank $2$ connections on curves. \textit{Ann. Inst. Fourier (Grenoble)} \textbf{60} 
(2010), 515--549.

\bibitem[He2]{Heu2} V. Heu, Stability of rank $2$ vector bundles 
along isomonodromic deformations, \textit{Math. Ann.} \textbf{60}
(2010), 515--549.

\bibitem[Hua]{Huang} L. Huang,
{\it On joint moduli spaces}, Math. Ann. \textbf{302} (1995), 61--79.
 
\bibitem[Hub]{Hubbard} J. H. Hubbard, {\it Teichm\"uller theory and applications to geometry, topology, and dynamics},
Vol. 1, Matrix Editions, Ithaca, NY (2006).
 
\bibitem[Ko]{Kostov} V. Kostov, Fuchsian linear systems on $\mathbb{CP}^1$ 
and the Riemann-Hilbert problem, \textit{Com. Ren. Acad. Sci. Paris}
\textbf{315} (1992), 143--148.

\bibitem[Mal]{Mal} B. Malgrange, Sur les d\'eformations isomonodromiques I, II, 
Mathematics and physics (Paris, 1979/1982), 427--438, \textit{Progr. Math.}, 
\textbf{37}, Birkh\"auser Boston, Boston, MA, 1983.

\bibitem[MY]{MY} M. Maruyama and K. Yokogawa, Moduli of
parabolic stable sheaves, \textit{Math. Ann.} \textbf{293}
(1992) 77--99.

\bibitem[MS]{MS} V. B. Mehta and C. S. Seshadri, Moduli of vector bundles on curves 
with parabolic structures, \textit{Math. Ann.} \textbf{248} (1980), 205--239.

\bibitem[Oh]{Oh} M. Ohtsuki, A residue formula for Chern classes associated with
logarithmic connections, \textit{Tokyo Jour. Math.} \textbf{5} (1982), 13--21.

\bibitem[Pl]{Plemelj} J. Plemelj, Problems in the sense of Riemann and
Klein, {\it Interscience Tracts in Pure and Applied Mathematics}, {\bf 16},
Interscience Publishers John Wiley \& Sons Inc., New York-London-Sydney, 1964.

\bibitem[Yo]{Yo} K. Yokogawa, Infinitesimal deformation of parabolic Higgs sheaves,
\textit{Internat. Jour. Math.} \textbf{6} (1995), 125--148. 

\end{thebibliography}
\end{document}